\documentclass[11pt]{amsart}
\usepackage{mathrsfs}
\usepackage{amssymb}
\usepackage[T1]{fontenc}
\pagestyle{plain}
\usepackage[all]{xy}
\usepackage{amscd}
\usepackage{amsmath, amssymb}
\usepackage{amsthm}
\usepackage{amsfonts}
\usepackage[colorlinks,linkcolor=blue,citecolor=blue, pdfstartview=FitH]
{hyperref}
\usepackage{backref}
 \setlength{\textwidth}{5.8in} \setlength{\oddsidemargin}{0.3in}
\setlength{\evensidemargin}{0.3in}\setlength{\footskip}{0.3in}
\setlength{\headsep}{0.25in}
\usepackage{amscd}
\usepackage{easybmat}
\usepackage{mathrsfs}
\usepackage{amsfonts}
\usepackage{color}
\usepackage{pifont}
\usepackage{upgreek}
\usepackage{bm}
\usepackage{hyperref}
\usepackage{shorttoc}
\usepackage{amsmath,amstext,amsthm,a4,amssymb,amscd}
\usepackage[mathscr]{eucal}
\usepackage{mathrsfs}
\usepackage{epsf}
\usepackage{tikz-cd}
\usepackage{CJK}

\def\p{\partial}

\def\b{\bar}

\def\wt{\widetilde}

\theoremstyle{plain}
\newtheorem{thm}{Theorem}[section]
\newtheorem{lemma}[thm]{Lemma}
\newtheorem{prop}[thm]{Proposition}
\newtheorem{cor}[thm]{Corollary}
\theoremstyle{definition}

\newtheorem{ex}[thm]{Example}
\theoremstyle{definition}
\newtheorem{defn}[thm]{Definition}
\numberwithin{equation}{section}

\usepackage{fancyhdr}
\pagestyle{fancy}
\fancyhead{}
\fancyhead[CO]{\scriptsize{OBSTRUCTIONS OF DEFORMING COMPLEX STRUCTURES}}
\fancyhead[CE]{\scriptsize{XUEYUAN WAN AND WEI XIA}}
\rhead[]{\footnotesize{\thepage}}
\lhead[\footnotesize{\thepage}]{}
\cfoot{}
 \setlength{\headheight}{11.0pt}
 \setlength{\footskip}{13.0pt}

\makeatletter
\@namedef{subjclassname@2020}{\textup{2020} Mathematics Subject Classification}
\makeatother

\begin{document}
\begin{CJK}{UTF8}{gbsn}
\title{Obstructions of deforming complex structures and cohomology contractions}

\date{\today}

\author{Xueyuan Wan}

\author{Wei Xia}

\address{Xueyuan Wan: Mathematical Science Research Center, Chongqing University of Technology, Chongqing 400054, China}
\email{xwan@cqut.edu.cn}

\address{Wei Xia: Mathematical Science Research Center, Chongqing University of Technology, Chongqing, P. R. China, 400054}
\email{xiawei@cqut.edu.cn, xiaweiwei3@126.com}

  \subjclass[2020]{32G05, 32Q25, 53C55}  
  \keywords{Deformations, complex structures, obstructions, Fr\"olicher spectral sequences, Kodaira principal.}
\begin{abstract}
The Kodaira principle asserts that suitable cohomological contraction maps annihilate obstructions to deforming complex structures. In this paper, we revisit these phenomena from a purely analytic point of view, developing a refined power series method for the deformation of $(p,q)$-forms and complex structures. Working with the Fr\"olicher spectral sequence, we show that under natural partial vanishing conditions on its differentials, all obstruction classes lie in the kernel of the corresponding contraction maps. This yields a refined Kodaira principle that recovers and strictly extends the known results. As a main application, we obtain new unobstructedness criteria for compact complex manifolds with trivial canonical bundle.
\end{abstract}

\maketitle
\tableofcontents

\section{Introduction}
The deformation space can exhibit arbitrary singularities---a phenomenon often referred to as 'Murphy's Law' in algebraic geometry, as demonstrated by Vakil \cite{Vak06}. A fundamental problem, therefore, is to determine when the deformation space is smooth. To address this, a celebrated approach is available: the so-called Kodaira principle.
The Kodaira principle is a concise expression of the idea that the ambient cohomology of a K\"ahler manifold naturally annihilates obstructions to deformation. This viewpoint originates in the pioneering work of Kodaira (see, for example, \cite{MR2109686, Cle05, Man}). The principle was first formulated by Kodaira-Spencer \cite{KS59} in the study of deformations of complex hypersurfaces and was later extended by Bloch \cite{Blo72} to arbitrary codimension. Since then, analogous phenomena have been investigated in a wide range of deformation-theoretic contexts; see, for instance, \cite{Ran99a, Ran99b, Cle05, BF03, Man04,KKP08,Iac11}. 
A classical and particularly influential result is a theorem of Clemens \cite[Theorem~10.1]{Cle05}, which states that the obstruction classes to deforming the complex structure of a compact K\"ahler manifold $X$ are annihilated by a suitable cohomological contraction map:
\[
\{\text{obstructions}\}
\subseteq
\ker\Bigl\{
H^{0,2}_{\b{\p}}\bigl(X,T^{1,0}\bigr)
\longrightarrow
\bigoplus_{p,q}\operatorname{Hom}\bigl(H^{p,q}_{\b{\p}}(X),H^{p-1,q+2}_{\b{\p}}(X)\bigr)
\Bigr\}.
\]

To make this precise, recall that there is a natural cohomological contraction map
\begin{equation}\label{eqn1.1}
\mu\colon
H^{0,2}_{\b{\p}}\bigl(X,T^{1,0}\bigr)
\longrightarrow
\bigoplus_{p,q}\operatorname{Hom}\bigl(H_{\bar\partial}^{p,q}(X),H_{\bar\partial}^{p-1,q+2}(X)\bigr),
\quad
[\sigma]\longmapsto [i_\sigma(\bullet)].
\end{equation}
Clemens' theorem can be conveniently reformulated in terms of~\eqref{eqn1.1} as follows.

\begin{thm}[{\cite[Theorem~10.1]{Cle05}}]\label{thm1.1}
Let $X$ be a compact K\"ahler manifold and let $\sum_{1\le j\le N}\phi_j$ be an $N$-th order deformation of $X$. Then the obstruction of $\sum_{1\le j\le N}\phi_j$ lies in the kernel of the cohomological contraction map~\eqref{eqn1.1}, i.e.
\[
\sum_{j=1}^N[\phi_j,\phi_{N+1-j}] \in \ker\mu.
\]
In particular, if $\ker\mu=0$, then the deformations of $X$ are unobstructed.
\end{thm}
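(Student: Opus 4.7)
The plan is to extend the power series method of Kodaira--Spencer from deformations of complex structures to simultaneous deformations of Dolbeault classes. Given any $\bar\partial$-closed representative $\alpha\in A^{p,q}(X)$ of a class $[\alpha]$ and the Maurer--Cartan truncation $\phi(t)=\sum_{j=1}^N \phi_j t^j$, I would search for a formal series $\alpha(t)=\alpha+\sum_{j\geq 1}\alpha_j t^j$ that stays closed under the deformed $\bar\partial$-operator $\bar\partial-i_{\phi(t)}\partial$ of the family. Matching coefficients in $t$ produces at each order $n\geq 1$ the $\bar\partial$-equation
\[
\bar\partial \alpha_n \;=\; R_n,\qquad R_n \;:=\; \sum_{j+k=n,\,j\geq 1}\, i_{\phi_j}\partial \alpha_k,
\]
together with a compatibility condition $\bar\partial R_n=0$ that follows inductively from the Maurer--Cartan equations for $\{\phi_j\}_{j\leq N}$.

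Next, I would argue by induction on $n\leq N$ that each $R_n$ is in fact $\partial$-exact, so that the $\partial\bar\partial$-lemma (available since $X$ is K\"ahler) produces a primitive $\alpha_n$ satisfying $\bar\partial\alpha_n=R_n$ and carrying enough additional structure (specifically, $\alpha_n$ chosen so that $\partial\alpha_n$ itself lies in a controlled form) to propagate the induction. This is where the K\"ahler hypothesis enters decisively: on a non-K\"ahler manifold the $\partial\bar\partial$-lemma typically fails and the induction cannot close.

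Pushing the same construction one step further, the combinatorics of $R_{N+1}$, combined with the fact that the Maurer--Cartan equation fails at order $N+1$ precisely by $\Omega_{N+1}=\sum_{j=1}^N[\phi_j,\phi_{N+1-j}]$, should give an identification
\[
R_{N+1} \;=\; -\tfrac12\, i_{\Omega_{N+1}}\alpha \;+\; (\text{$\bar\partial$-exact from the inductive hypothesis}),
\]
so that $[R_{N+1}]= -\tfrac12\,\mu([\Omega_{N+1}])[\alpha]$ in $H^{p-1,q+2}_{\bar\partial}(X)$. On the other hand, the same Tian--Todorov-type manipulations show that $R_{N+1}$ itself is $\partial$-exact, hence $\bar\partial$-exact by the $\partial\bar\partial$-lemma. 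Comparing the two identifications forces $\mu([\Omega_{N+1}])[\alpha]=0$, and since $[\alpha]$ was arbitrary, $[\Omega_{N+1}]\in\ker\mu$ as claimed.

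The main technical obstacle lies in step two: maintaining, at every order, the invariant that $R_n$ is not merely $\bar\partial$-closed but $\partial$-exact. This demands careful Tian--Todorov-type manipulations of iterated contractions $i_{\phi_j}\partial$ and of brackets $[\phi_j,\phi_k]$ paired with forms, together with a precise bookkeeping of the residual freedom in each $\alpha_n$ (an additive $\partial\bar\partial$-exact form). Once this structural invariant propagates, the order-$N+1$ computation collapses to the clean splitting above and the conclusion is immediate.
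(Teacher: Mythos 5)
There is a genuine gap at the decisive step, and it is a bidegree mismatch. With $\alpha,\alpha_k\in A^{p,q}(X)$ and $\phi_j\in A^{0,1}(X,T^{1,0})$, your error terms $R_n=\sum_{j+k=n}i_{\phi_j}\partial\alpha_k$ live in $A^{p,q+1}(X)$, whereas the obstruction contraction $i_{\Omega_{N+1}}\alpha$ with $\Omega_{N+1}=\sum_j[\phi_j,\phi_{N+1-j}]\in A^{0,2}(X,T^{1,0})$ lives in $A^{p-1,q+2}(X)$. So the claimed identity $R_{N+1}=-\tfrac12 i_{\Omega_{N+1}}\alpha+(\bar\partial\text{-exact})$ cannot hold: the two sides have different Hodge types. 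More structurally, you have conflated two different obstructions. The solvability of $\bar\partial\alpha_{N+1}=R_{N+1}$ is the obstruction to extending the \emph{form} one more order, measured in $H^{p,q+1}_{\bar\partial}(X)$, and on a K\"ahler manifold this recursion closes at every order regardless of whether the Maurer--Cartan equation holds at order $N+1$. The class $\mu([\Omega_{N+1}])[\alpha]\in H^{p-1,q+2}_{\bar\partial}(X)$, which is what the theorem is about, never appears in a recursion confined to types $(p,q)$ and $(p,q+1)$, because the degree-lowering term in the conjugated differential, $e^{-i_\phi}\circ d\circ e^{i_\phi}=\partial+\bar\partial_\phi+i_{\bar\partial\phi-\frac12[\phi,\phi]}$, is exactly the piece your operator $\bar\partial-i_{\phi(t)}\partial$ omits.

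The repair is to extend $\alpha_0^{p,q}$ not as a single $(p,q)$-form but as an element $\alpha(t)=\alpha^{p,q}(t)+\alpha^{p+1,q-1}(t)+\cdots\in F^pA^{p+q}(X)$ solving $d\bigl(e^{i_\phi}(\alpha(t))\bigr)=0$ mod $t^{N+1}$ (a triangular system whose top equation is your $\bar\partial_\phi\alpha^{p,q}=0$), which is what the paper does in Theorem~\ref{thm11}. Then, extracting the $(p-1,q+2)$-component of the order-$(N+1)$ coefficient, the term $i_{\bar\partial\phi-\frac12[\phi,\phi]}\alpha(t)$ contributes precisely $-\tfrac12 i_{\Omega_{N+1}}\alpha_0^{p,q}$, and the paper identifies this with $\bigl(d\circ(e^{i_\phi}-1)\alpha(t)\bigr)^{p-1,q+2}_{N+1}$, which is shown to be $\bar\partial$-exact using the filtration identity $F^{p-1}A^{p+q+1}\cap dA^{p+q}=dF^{p-1}A^{p+q}$ (automatic in the K\"ahler case, and in general a partial-degeneration condition on the Fr\"olicher spectral sequence). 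Your intuition that the $\partial\bar\partial$-lemma drives the induction is correct for solving each extension step, but without tracking the components in $F^{p-1}$ the argument never reaches the class $\mu([\Omega_{N+1}])[\alpha]$ and the proof does not close.
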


There are two main approaches to Theorem~\ref{thm1.1}.  
Clemens' original proof \cite{Cle05} relies on his theory of transversely holomorphic trivializations, which is technically quite involved.  
A second, more algebraic approach is based on differential graded Lie algebras and $L_\infty$-algebras, as developed in \cite{Man04,IM10,Man}.  
Within this latter framework, Theorem~\ref{thm1.1} has been substantially generalized to certain non-K\"ahler manifolds and is usually referred to as a \emph{Kodaira principle}. 
\begin{thm}[{\cite[Corollary~8.8.4]{Man}, Kodaira principle}]\label{thm1.2}
Let $X$ be a complex manifold and let $p$ be a positive integer such that the three inclusions
\[
F_X^{p+1} \longrightarrow F_X^p \longrightarrow F_X^{p-1} \longrightarrow F_X^0 = A_X^{*,*}
\]
are injective in cohomology. Then the contraction map
\[
\boldsymbol{i}\colon
H^{2}(X,T^{1,0})
\longrightarrow
\bigoplus_{q=0}^n\operatorname{Hom}\bigl(H^q(X,\Omega_X^p),H^{q+2}(X,\Omega_X^{p-1})\bigr),
\quad
\boldsymbol{i}_\eta(\omega)=\eta\lrcorner\omega,
\]
annihilates every obstruction to deformations of $X$.
\end{thm}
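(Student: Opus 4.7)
The plan is to prove the theorem by an inductive power-series deformation of cohomology representatives, in the spirit of the classical Kodaira--Spencer and Clemens arguments but carried out at the level of the Fr\"olicher filtration. Let $\phi_t = \sum_{j=1}^N \phi_j t^j$ be an $N$-th order deformation, so $\bar\partial \phi_j + \tfrac12\sum_{k+l=j}[\phi_k,\phi_l]=0$ for $1\le j\le N$, and let $\omega \in A^{p,q}(X)$ be $\bar\partial$-closed, representing a class in $H^q(X,\Omega_X^p)$. The goal is to show that $\bigl(\tfrac12\sum_{j=1}^N[\phi_j,\phi_{N+1-j}]\bigr)\lrcorner\omega \in A^{p-1,q+2}(X)$ is $\bar\partial$-exact.

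First I would construct a formal mixed-type series $\omega_t = \omega + \sum_{k\ge 1}\omega_k t^k$, with components of explicitly tracked bidegrees, satisfying a deformed-holomorphicity equation $(\bar\partial + i_{\phi_t}\partial)\omega_t \equiv 0 \pmod{t^{N+1}}$ on the mixed-type total complex. Expanding order by order produces equations of the shape $\bar\partial \omega_k = -\sum_{j+l=k,\,j\ge 1} \phi_j \lrcorner \partial \omega_l$ together with lower-order correction terms. A direct check using $\bar\partial^2=0$ and the Maurer--Cartan equations shows each right-hand side is $\bar\partial$-closed, so the induction proceeds provided its cohomology class vanishes in $H_{\bar\partial}^{*,*}(X)$.

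Next, I would apply the Fr\"olicher hypothesis to guarantee solvability. Using the Tian--Todorov-type graded commutator identity between $\partial$ and $i_{\phi_j}$, each right-hand side can be rewritten, modulo $\bar\partial$-exact terms, as $\partial$ of a form sitting one step higher in the Hodge filtration. The assumed injectivities $F^{p+1}\hookrightarrow F^p \hookrightarrow F^{p-1} \hookrightarrow F^0$ in cohomology translate precisely into the statement that classes sitting in these successive filtration pieces must vanish whenever they become $d$-exact in the total complex; this yields the $\bar\partial$-primitives $\omega_k$ at each order $k\le N$. At order $N+1$, the leftover term that cannot be absorbed into $\bar\partial(\,\cdot\,)$ is exactly $-\tfrac12\bigl(\sum_j[\phi_j,\phi_{N+1-j}]\bigr)\lrcorner\omega$ plus a $\partial$-exact correction lying in $F^{p-1}$; a final application of the three-step injectivity shows this correction is itself $\bar\partial$-exact, so the image of the obstruction under $\boldsymbol{i}$ vanishes in $H^{q+2}(X,\Omega_X^{p-1})$, as claimed.

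The main technical challenge will be the careful bookkeeping of the Hodge filtration through the induction: each contraction $\phi_j\lrcorner$ lowers holomorphic degree by one while each $\partial$ raises it, and one must verify that every intermediate expression sits in exactly the filtration step where one of the three assumed injectivities applies. Precisely three steps are needed because $\boldsymbol{i}$ itself shifts holomorphic degree by one (from $p$ to $p-1$), the induction requires an extra primitive one step higher (for $\omega \in F^p$), and one step lower is used to promote $d$-exactness in the total complex to $\bar\partial$-exactness in the target of $\boldsymbol{i}$. Showing that no finer information about the higher Fr\"olicher differentials $d_r$ is needed, beyond these three injectivities, is the delicate heart of the argument and will likely require an explicit choice of $\omega_k$ at each stage tuned to the filtration level of the correction terms it must cancel.
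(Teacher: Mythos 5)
Your proposal follows essentially the same route as the paper: a power-series extension of a $\bar\partial$-closed $(p,q)$-form controlled by the Hodge filtration, with the three injectivity hypotheses supplying the order-by-order $\bar\partial$-primitives and forcing $\bigl(\sum_{j}[\phi_j,\phi_{N+1-j}]\bigr)\lrcorner\,\omega$ to be $\bar\partial$-exact at order $N+1$. The paper merely packages this argument by first proving the fixed-bidegree refinement (Theorem~\ref{deformation obstruction}) in terms of vanishing Fr\"olicher differentials and then translating the three injectivities into those vanishing conditions via Corollary~\ref{cor4.3} and Lemmas~\ref{cordeg1} and~\ref{cordeg2}, so your accounting of why exactly three filtration steps are needed matches the mechanism of the paper's proof.
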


The main goal of this paper is threefold.

\smallskip

\noindent
(1) We first clarify the relationship between the Kodaira principle and our earlier work on deformations of $(p,q)$-forms \cite{WX2023}.  
In \cite{WX2023} we developed a systematic analytic framework for extending $(p,q)$-forms along deformations.  
Here we refine this viewpoint and show how the Kodaira-type phenomena naturally emerge from the finite-step extension machinery.

\smallskip

\noindent
(2) We then provide a new analytic perspective on the obstructions to deforming complex structures.  
Our arguments are based on the classical power series method combined with a careful analysis of the Fr\"olicher spectral sequence, in the spirit of \cite{WX2023,MR4425290,Xia19dDol,MR3920325,MR3994313,LRY15}.  
This allows us to isolate precisely which pieces of the Fr\"olicher differentials control the obstruction classes.

\smallskip

\noindent
(3) Finally, we weaken global cohomological assumptions such as the K\"ahler or $\partial\bar\partial$ conditions and replace them with finer conditions on the Fr\"olicher spectral sequence.  
This makes our results applicable to a wider class of non-K\"ahler manifolds, such as those considered in \cite{Ghy95,Rol11a,Liu16,ACRT18,Pop19}, while still retaining strong conclusions on unobstructedness.

\medskip

Let $X$ be a compact complex manifold of dimension $n$, and denote its Fr\"olicher spectral sequence by $(E_r^{p,q},d_r^{p,q})$, where
\[
d_r^{p,q}\colon E_r^{p,q}\longrightarrow E_r^{p+r,q-r+1}
\]
is the $r$-th differential (see Section~\ref{FSS} for details).  
We say that the Fr\"olicher spectral sequence degenerates at $E_1$ if $d_r^{p,q}=0$ for all $r\ge1$ and all $p,q$.  
In particular, any compact K\"ahler manifold satisfies the $\partial\bar{\partial}$-lemma, and hence its Fr\"olicher spectral sequence degenerates at $E_1$.  

For the purposes of controlling obstructions, however, it is not necessary to require the vanishing of \emph{all} differentials $d_r^{p,q}$.  
A crucial point of this paper is that suitable vanishing conditions restricted to a fixed bidegree $(p,q)$ (and a finite number of adjacent bidegrees) already suffice to force the obstruction map to land in the kernel of the contraction map.  
This leads to the following refinement of the Kodaira principle, simultaneously extending Theorems~\ref{thm1.1} and~\ref{thm1.2}.

\begin{thm}\label{thm1.3}
Let $X$ be a compact complex manifold such that
\[
\bigoplus_{r\ge 1} d_r^{p,q} = 0,
\quad
\bigoplus_{r\ge 1} d_r^{p-1,q+1} = 0,
\quad
\bigoplus_{\substack{r-1 \ge i\ge 0}} d_r^{p-2-i,\,q+2+i}(X)=0.
\]
Then the obstruction
\(
\sum_{j=1}^N [\phi_j,\phi_{N+1-j}]
\)
lies in the kernel of the contraction map
\[
\mu_{p,q}\colon
H^{0,2}_{\b{\p}}\bigl(X,T^{1,0}\bigr)
\longrightarrow
\operatorname{Hom}\bigl(H_{\bar\partial}^{p,q}(X),H_{\bar\partial}^{p-1,q+2}(X)\bigr),
\]
that is,
\[
\sum_{j=1}^N [\phi_j,\phi_{N+1-j}] \in \ker \mu_{p,q},
\]
where $\mu_{p,q}$ is the $(p,q)$-component of $\mu$.
\end{thm}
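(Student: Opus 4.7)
The plan is to apply the finite-order extension machinery for $(p,q)$-forms from \cite{WX2023}, combined with the Maurer-Cartan equation for the family $\phi(t) = \sum_{k=1}^N t^k \phi_k$, and to read the obstruction as the failure of the extension at order $N+1$. Fix any $\bar\partial$-closed representative $\omega_0 \in A^{p,q}(X)$ of a given class $[\omega] \in H^{p,q}_{\bar\partial}(X)$ and seek a formal expansion
\[
\omega(t) \;=\; \omega_0 \;+\; \sum_{k \ge 1} t^k \sum_{j \ge 0}\omega_k^{p-j,\, q+j},
\qquad \omega_k^{p-j, q+j} \in A^{p-j, q+j}(X),
\]
that represents a $\bar\partial_{\phi(t)}$-closed $(p,q)$-form on the deformed complex structure $X_{\phi(t)}$. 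Translating this condition back to operators on the fixed manifold $X$ through the standard isomorphism induced by $e^{i_{\phi(t)}}$ produces, at each order $k$ and each bidegree index $j$, an equation of the schematic form
\[
\bar\partial\, \omega_k^{p-j, q+j} \;=\; R_k^{p-j, q+j+1},
\]
where $R_k^{p-j, q+j+1}$ is a universal polynomial in $\phi_1,\dots,\phi_k$ and the previously constructed components $\omega_\ell^{\bullet, \bullet}$ with $\ell < k$.

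We would then solve this system inductively for $k = 1,\dots, N$. Each right hand side $R_k^{p-j, q+j+1}$ is automatically $\bar\partial$-closed by the inductive hypothesis and by the Maurer-Cartan equation for $\phi(t)$, so the content is to show $\bar\partial$-exactness. The $j = 0$ case uses the full zigzag freedom: the hypothesis $\bigoplus_{r\ge1} d_r^{p,q} = 0$ lifts $\omega_0$ to a $d$-closed representative in the Hodge filtration $F^p$, supplying exactly the corrections needed for solvability in bidegree $(p,q)$. The $j = 1$ case is handled analogously via $\bigoplus_{r\ge1} d_r^{p-1, q+1} = 0$, and the mixed hypothesis $\bigoplus_{r-1\ge i \ge 0} d_r^{p-2-i, q+2+i} = 0$ is tailored to the recursive structure of the lower-bidegree components $j\ge 2$ at order $k$, where the inner range $0 \le i \le r-1$ records exactly how deep the zigzag has to go.

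At order $N+1$, attempting to continue the recursion, the bidegree $(p-1, q+2)$ component becomes
\[
\bar\partial\, \omega_{N+1}^{p-1, q+2}
\;=\; -\tfrac{1}{2}\, i_{\,\Omega_{N+1}}\, \omega_0
\;+\; (\text{exact corrections built from the } \omega_\bullet^{p-1, q+1}),
\]
where $\Omega_{N+1} := \sum_{j=1}^N [\phi_j, \phi_{N+1-j}]$, so the solvability established in the previous step forces $\bigl[\,i_{\Omega_{N+1}} \omega_0\bigr] = 0$ in $H^{p-1, q+2}_{\bar\partial}(X)$. Since $[\omega] \in H^{p,q}_{\bar\partial}(X)$ was arbitrary, this is exactly the statement $\mu_{p,q}(\Omega_{N+1}) = 0$. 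The main obstacle is the combinatorial bookkeeping in the inductive step: tracking the universal polynomials $R_k^{p-j, q+j+1}$ through the recursion, verifying their $\bar\partial$-closedness at each stage, and matching each of them to the specific Frölicher differential whose vanishing has been hypothesized. The $d_r^{p-2-i, q+2+i}$ condition in particular is fine-tuned to the depth of the zigzag needed in the lower-bidegree components, and propagating it consistently through the full induction is the most delicate point.
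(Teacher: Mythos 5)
Your overall strategy---extend a $\bar\partial$-closed $(p,q)$-form order by order along the deformation and read the obstruction off as the bidegree-$(p-1,q+2)$ defect at order $N+1$, which is $-\tfrac12 i_{[\phi,\phi]_{N+1}}\omega_0$---is exactly the paper's approach in spirit. However, there are two genuine gaps in how you set it up and, more importantly, in how you close the argument.

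First, the ansatz goes in the wrong direction. The paper does not expand the extended form into components $\omega_k^{p-j,q+j}$ with $j\ge 0$; it takes $\alpha(t)=\alpha^{p,q}(t)+\alpha^{p+1,q-1}(t)+\cdots+\alpha^{p+n,q-n}(t)\in F^pA^{p+q}(X)$, i.e.\ with \emph{holomorphic degree at least} $p$, and replaces the single equation $\bar\partial_\phi\alpha^{p,q}(t)=0$ by $d\bigl(e^{i_\phi}\alpha(t)\bigr)=0 \bmod t^{N+1}$. This is essential for two reasons: the system $(\bar\partial_\phi+\partial)\alpha(t)=0$ is triangular only because $\partial$ raises the holomorphic degree into the next component $\alpha^{p+i,q-i}$, and the hypotheses of the theorem enter precisely through the filtration identities $F^{p}A^{p+q+1}\cap dA^{p+q}=dF^{p}A^{p+q}$ and $F^{p-1}A^{p+q+1}\cap dA^{p+q}=dF^{p-1}A^{p+q}$ (Lemmas~\ref{cordeg1} and~\ref{cordeg2}). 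With components of holomorphic degree $\le p$ there is no such triangular structure and no way to invoke these equivalences, so the "universal polynomials $R_k^{p-j,q+j+1}$" cannot be matched to the hypothesized Fr\"olicher differentials as you claim.

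Second, and this is the crux, the final step is asserted rather than proved, and the third hypothesis is misattributed. You place $\bigoplus_{r-1\ge i\ge 0}d_r^{p-2-i,q+2+i}=0$ inside the induction (for the "$j\ge2$ components"), but in fact the induction up to order $N$ only needs $\bigoplus_{r-1\ge i\ge0}d_r^{p-1-i,q+1+i}=0$ (equivalent to the second plus third hypotheses), via $F^{p}A^{p+q+1}\cap dA^{p+q}=dF^{p}A^{p+q}$. The third hypothesis is used exactly once, at order $N+1$: one shows (Lemma~\ref{lemma2.2}(ii)) that $d\bigl((e^{i_\phi}-1)\alpha\bigr)_{N+1}$ lies in $dA^{p+q}(X)\cap F^{p-1}A^{p+q+1}(X)$, and then $\bigoplus_{r-1\ge i\ge0}d_r^{p-2-i,q+2+i}=0$ upgrades this to $dF^{p-1}A^{p+q}(X)$, whose $(p-1,q+2)$-component is $\bar\partial$-exact. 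Your phrase "the solvability established in the previous step forces $[i_{\Omega_{N+1}}\omega_0]=0$" is not a proof: solvability through order $N$ does \emph{not} imply that the order-$(N+1)$ defect in bidegree $(p-1,q+2)$ is $\bar\partial$-exact---that exactness is the entire content of the theorem and is exactly where the extra spectral-sequence vanishing must be spent. Without identifying the defect with the $(p-1,q+2)$-component of a $d$-exact form lying in $F^{p-1}$ (the identity $-i_{\frac12[\phi,\phi]_{N+1}}\alpha_0^{p,q}=\bigl(d\circ(e^{i_\phi}-1)\alpha(t)\bigr)^{p-1,q+2}_{N+1}$) and then applying Lemma~\ref{cordeg2} at level $p-1$, the argument does not close.
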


In other words, Theorem~\ref{thm1.3} may be viewed as a \emph{refined Kodaira principle} at the level of a fixed bidegree \((p,q)\), formulated entirely in terms of partial vanishing of the Fr\"olicher differentials; see Section~\ref{sec-KP} for a more detailed discussion.
Our proof uses an analytic power-series construction for $(p,q)$-forms and works equally well beyond the K\"ahler category. 

On the other hand, the hypotheses in Theorem~\ref{thm1.3} are formulated at a fixed bidegree $(p,q)$, and this is precisely what leads to a more refined description of the obstruction space. In contrast, the injectivity-in-cohomology assumption in Theorem~\ref{thm1.2} is not imposed at a fixed bidegree: by definition, the cohomology of $F_X^p$ is computed with respect to the total differential $d$, which mixes Hodge types and therefore does not preserve the index $q$.

\medskip

The unobstructedness problem for deformations of compact complex manifolds is a central theme in complex geometry.  
The celebrated Bogomolov--Tian--Todorov theorem \cite{Tia87,Tod89} asserts that any compact K\"ahler Calabi--Yau manifold has unobstructed deformations.  
This result has been generalised in several directions; in particular, the K\"ahler hypothesis can be weakened to the $\partial\bar{\partial}$-assumption (and even to $E_1=E_\infty$ assumption), see, for example, \cite[Theorem~1.5]{Voisin_HodgeTopology}.  

More recently, Popovici--Stelzig--Ugarte \cite{PSU} studied unobstructed deformations of Calabi--Yau page-$1$-$\partial\bar{\partial}$ manifolds under additional assumptions, providing a unified framework that encompasses classical examples such as the $3$- and $5$-dimensional Iwasawa manifolds.  
In this setting, the Fr\"olicher spectral sequence is known to degenerate at $E_2$ \cite[Theorem and Definition~2.11]{PSU}.  
On the other hand, Popovici \cite{MR3978322} obtained a more general unobstructedness criterion formulated in terms of Bott--Chern and Aeppli cohomologies.

\begin{thm}[{\cite[Observation~3.5]{MR3978322}}]\label{thm1.4}
Let $X$ be a compact complex manifold of complex dimension $n$ with trivial canonical bundle $K_X$ such that the linear maps
\[
A_1\colon H_{\bar{\partial}}^{n-1,1}(X,\mathbb{C}) \longrightarrow H_{BC}^{n,1}(X,\mathbb{C}), 
\qquad
[\alpha]_{\bar{\partial}} \longmapsto [\partial\alpha]_{BC},
\]
and
\[
A_2\colon H_A^{n-2,2}(X,\mathbb{C}) \longrightarrow H_{BC}^{n-1,2}(X,\mathbb{C}),
\qquad
[v]_A \longmapsto [\partial v]_{BC},
\]
are identically zero. Then the Kuranishi family of $X$ is unobstructed.
\end{thm}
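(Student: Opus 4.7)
The plan is to adapt the classical Tian--Todorov power-series argument, replacing the full $\partial\bar\partial$-lemma with the weaker hypotheses $A_1=0$ and $A_2=0$. Fix a nowhere-vanishing holomorphic section $\Omega \in H^0(X,K_X)$; since $\Omega$ is both $\partial$- and $\bar\partial$-closed, contraction with $\Omega$ induces a $\bar\partial$-equivariant isomorphism $A^{0,q}(X,T^{1,0}) \xrightarrow{\sim} A^{n-1,q}(X)$, $\phi\mapsto \phi\lrcorner\Omega$. I will construct a formal solution $\phi(t)=\sum_{j\ge 1}\phi_j t^j$ of the Maurer--Cartan equation $\bar\partial\phi=\tfrac12[\phi,\phi]$ in which every coefficient satisfies the gauge condition $\partial(\phi_j\lrcorner\Omega)=0$. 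In the base case, $\phi_1$ represents a Kodaira--Spencer class, so $\phi_1\lrcorner\Omega\in A^{n-1,1}(X)$ is $\bar\partial$-closed; by $A_1=0$ its $\partial$-image is Bott--Chern exact, $\partial(\phi_1\lrcorner\Omega)=\partial\bar\partial\gamma_1$, and subtracting $\bar\partial\gamma_1$ (equivalently, modifying $\phi_1$ by a $\bar\partial$-coboundary, which preserves its Kodaira--Spencer class) enforces the gauge.

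For the inductive step, assume $\phi_1,\ldots,\phi_N$ have been constructed so that Maurer--Cartan holds to order $N$ and the gauge holds for each $j\le N$. The obstruction $\Psi_{N+1}=\sum_{j=1}^{N}[\phi_j,\phi_{N+1-j}]\in A^{0,2}(X,T^{1,0})$ is $\bar\partial$-closed, and the Tian--Todorov identity (which expresses $[\phi,\psi]\lrcorner\Omega$ as a $\partial$-exact term plus contractions against $\partial(\phi\lrcorner\Omega)$ and $\partial(\psi\lrcorner\Omega)$) combined with the gauge condition collapses it to
\[
\Psi_{N+1}\lrcorner\Omega \;=\; \pm\,\partial v_{N+1}, \qquad v_{N+1}:=\sum_{j=1}^{N}\phi_j\lrcorner\phi_{N+1-j}\lrcorner\Omega \;\in\; A^{n-2,2}(X).
\]
The left-hand side is $\bar\partial$-closed, so $\partial\bar\partial v_{N+1}=0$ and the Aeppli class $[v_{N+1}]_A\in H_A^{n-2,2}(X)$ is defined. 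The hypothesis $A_2=0$ then produces $\beta_{N+1}\in A^{n-2,1}(X)$ with $\partial v_{N+1}=\partial\bar\partial\beta_{N+1}$, whence
\[
\Psi_{N+1}\lrcorner\Omega \;=\; \pm\,\partial\bar\partial\beta_{N+1} \;=\; \mp\,\bar\partial(\partial\beta_{N+1})
\]
is $\bar\partial$-exact. Inverting the contraction, $\Psi_{N+1}$ is $\bar\partial$-exact in $A^{0,2}(X,T^{1,0})$, so we may take $\phi_{N+1}$ to be the preimage of $\pm\tfrac12\partial\beta_{N+1}$ under $\lrcorner\Omega$; this satisfies $\bar\partial\phi_{N+1}=\tfrac12\Psi_{N+1}$ and, because $\partial^{2}\beta_{N+1}=0$, automatically preserves the gauge $\partial(\phi_{N+1}\lrcorner\Omega)=0$. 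This closes the induction and shows that every formal obstruction vanishes.

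The main obstacle is not the formal recursion but the analytic passage from formal solvability to genuine smoothness of the Kuranishi family: the corrections $\beta_{N+1}$ and the residual $\bar\partial$-closed freedom in $\phi_{N+1}$ must be selected canonically---typically via harmonic projections for a fixed Hermitian metric---in order to obtain uniform Sobolev bounds and run a Kuranishi-type bootstrap under the non-standard gauge $\partial(\phi_j\lrcorner\Omega)=0$ in place of $\bar\partial^{*}\phi_j=0$. A secondary technical point is verifying that the Tian--Todorov identity really kills every correction term when applied to $[\phi_j,\phi_{N+1-j}]\lrcorner\Omega$ with $j\ge 2$, since the intermediate $\phi_j\lrcorner\Omega$ are no longer $\bar\partial$-closed, so the collapse to the single $\partial$-exact term must be forced by the gauge alone. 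Once these two issues are handled, the inductive construction produces a convergent Kuranishi family, establishing unobstructedness.
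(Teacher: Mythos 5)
Your argument is correct in outline, and it is essentially Popovici's original proof of this statement (the theorem is quoted from \cite{MR3978322}): a Tian--Todorov power-series construction in which $A_1=0$ enforces the gauge $\partial(\phi_1\lrcorner\Omega)=0$ at first order and $A_2=0$ converts the $\partial$-exact obstruction $\pm\partial v_{N+1}$ into a $\partial\bar\partial$-exact, hence $\bar\partial$-exact, form at each subsequent order. Your own worry about the terms with $j\ge 2$ resolves as you suspect: the Tian--Todorov identity expressing $[\phi,\psi]\lrcorner\Omega$ in terms of $\partial(\phi\lrcorner\psi\lrcorner\Omega)$, $\phi\lrcorner\partial(\psi\lrcorner\Omega)$ and $\psi\lrcorner\partial(\phi\lrcorner\Omega)$ is purely algebraic in $\partial$, so the $\partial$-gauge alone kills the correction terms and no $\bar\partial$-closedness of the intermediate $\phi_j\lrcorner\Omega$ is needed; likewise the convergence issue reduces to the standard criterion that unobstructedness is equivalent to order-by-order extendability, so no new estimates are required. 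The paper, however, reaches this theorem by a genuinely different route: it never manipulates Bott--Chern or Aeppli representatives in the deformation recursion, but instead shows in Lemma~\ref{lemma4.6} that $A_1=0$ implies $\bigoplus_{r\ge1}d_r^{n-1,1}=0$ and that $A_2=0$ implies $\bigoplus_{r-1\ge i\ge0}d_r^{n-2-i,\,2+i}=0$, and then invokes Theorem~\ref{thm-CY}, whose proof extends the holomorphic $(n,0)$-form $\Omega$ to a $d$-closed element of $F^{n}A^{n}(X)$ order by order in $t$ and identifies the obstruction with the $(n-1,2)$-component of a $d$-exact term, concluding via $\ker\mu_{n,0}=\{0\}$. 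The paper's detour buys strictly more: the Fr\"olicher-type hypotheses are weaker than $A_1=A_2=0$ (the implications in Lemma~\ref{lemma4.6} are one-way), they are localized at fixed bidegrees, and the same machinery yields the refined Kodaira principle of Theorem~\ref{deformation obstruction}. Your direct route buys self-containedness and an explicit gauge-fixed family, but does not recover that refinement.
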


In the Calabi--Yau case, Theorem~\ref{thm1.3} admits a particularly clean formulation.  
If $X$ is a compact complex manifold of dimension $n$ with trivial canonical bundle, then there exists a nowhere vanishing holomorphic $(n,0)$-form $\Omega\in H_{\bar\partial}^{n,0}(X)$.  
Contraction with $\Omega$ provides an isomorphism
\[
\Omega\colon
H_{\bar\partial}^{0,2}\bigl(X,T^{1,0}\bigr)
\longrightarrow
H_{\bar\partial}^{n-1,2}(X),
\qquad
[\sigma]\longmapsto[i_\sigma(\Omega)].
\]
In particular, $\ker\mu_{n,0}=\{0\}$.  
Combining this observation with Theorem~\ref{thm1.3}, we obtain the following unobstructedness result.

\begin{thm}\label{thm1.5}
Let $X$ be a compact complex manifold of dimension $n$ with trivial canonical bundle. Suppose that
\[
\bigoplus_{r\ge 1} d_r^{n-1,1} = 0
\quad\text{and}\quad
\bigoplus_{\substack{r-1 \ge i\ge 0}} d_r^{n-2-i,\,2+i}(X)=0.
\]
Then the deformations of the complex structure on $X$ are unobstructed.
\end{thm}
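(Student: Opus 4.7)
The plan is to apply Theorem~\ref{thm1.3} at the bidegree $(p,q)=(n,0)$ and then exploit the nowhere vanishing holomorphic volume form to conclude that the corresponding contraction map $\mu_{n,0}$ has trivial kernel. Theorem~\ref{thm1.3} will then force every obstruction class to vanish, and unobstructedness follows by the standard inductive lifting of higher-order deformations.

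First I would verify the three partial vanishing hypotheses of Theorem~\ref{thm1.3} at $(p,q)=(n,0)$. The assumption $\bigoplus_{r\ge 1}d_r^{n-1,1}=0$ is given verbatim, and $\bigoplus_{r-1\ge i\ge 0}d_r^{n-2-i,\,2+i}=0$ is exactly the remaining hypothesis in the statement. The first hypothesis, $\bigoplus_{r\ge 1}d_r^{n,0}=0$, is automatic for dimensional reasons: the differential $d_r^{n,0}\colon E_r^{n,0}\to E_r^{n+r,\,1-r}$ always has zero target, since $E_1^{n+1,0}=H_{\b{\p}}^{n+1,0}(X)=0$ on an $n$-dimensional manifold, and for $r\ge 2$ the antiholomorphic index $1-r$ is negative. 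Theorem~\ref{thm1.3} therefore gives
\[
\sum_{j=1}^{N}[\phi_j,\phi_{N+1-j}]\in\ker\mu_{n,0}\qquad\text{for every }N.
\]

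Next I would use the Calabi--Yau hypothesis to show $\ker\mu_{n,0}=\{0\}$. Triviality of $K_X$ provides a nowhere vanishing holomorphic $(n,0)$-form $\Om$; pointwise contraction with $\Om$ gives a bundle isomorphism $\Lambda^{0,2}\otimes T^{1,0}\xrightarrow{\cong}\Lambda^{n-1,2}$, and since $\Om$ is $\b{\p}$-closed this intertwines the $\b{\p}$-operators, descending to a Dolbeault isomorphism $H_{\b{\p}}^{0,2}(X,T^{1,0})\xrightarrow{\cong}H_{\b{\p}}^{n-1,2}(X)$ sending $[\sigma]$ to $[i_\sigma\Om]$. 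Since $\mu_{n,0}([\sigma])$ evaluated at the generator $[\Om]\in H^{n,0}_{\b{\p}}(X)$ equals $[i_\sigma\Om]$, its vanishing forces $[\sigma]=0$. Combining the two steps, every obstruction vanishes, and a standard induction on the order of the deformation yields unobstructedness. The main point is strategic rather than technical: the bidegree $(n,0)$ is chosen precisely because it simultaneously makes the first hypothesis of Theorem~\ref{thm1.3} automatic and, via the holomorphic volume form, makes $\mu_{n,0}$ injective, so only the remaining two hypotheses need to be assumed.
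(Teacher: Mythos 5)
Your proposal is correct and follows essentially the same route as the paper: apply Theorem~\ref{thm1.3} at bidegree $(n,0)$, note that $\bigoplus_{r\ge1}d_r^{n,0}=0$ holds automatically (the targets $E_r^{n+r,1-r}$ vanish), and use contraction with the nowhere vanishing holomorphic $(n,0)$-form to see that $\mu_{n,0}$ is injective, so all obstruction classes vanish. This is exactly the argument given for Theorem~\ref{thm-CY} in Section~\ref{Obstructions}.
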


We show in Lemma~\ref{lemma4.6} that the conditions $A_1=0=A_2$ in Theorem~\ref{thm1.4} are strictly stronger than the Fr\"olicher-type vanishing assumptions appearing in Theorem~\ref{thm1.5}.  
Thus Theorem~\ref{thm1.5} indeed provides a genuine extension of Popovici's unobstructedness theorem.  
As an immediate corollary, we obtain a particularly transparent criterion in terms of full degeneracy in total degree $n$.

\begin{cor}
Let $X$ be a compact complex manifold of dimension $n$ with trivial canonical bundle.  
If its Fr\"olicher spectral sequence degenerates at $E_1$ in total degree $n$, that is,
\[
d_r^{p,q}=0
\quad\text{for all }r\ge1\text{ and all }p,q\text{ with }p+q=n,
\]
then $X$ has unobstructed deformations.
\end{cor}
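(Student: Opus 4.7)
The plan is to deduce the corollary directly from Theorem~\ref{thm1.5} by verifying that every bidegree appearing in the hypotheses of that theorem lies on the anti-diagonal $p+q=n$, so that the blanket assumption of the corollary immediately supplies the required vanishing. There is no genuine obstacle here: the whole content is a bidegree bookkeeping argument, and the substantive work has already been absorbed into Theorem~\ref{thm1.5}.

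First I would check the two hypotheses of Theorem~\ref{thm1.5} one by one. The first block is $\bigoplus_{r\ge 1}d_r^{n-1,1}$, and since $(n-1)+1=n$, every such differential originates from the anti-diagonal $p+q=n$. The second block ranges over pairs $(r,i)$ with $r\ge 1$ and $0\le i\le r-1$, at bidegree $(n-2-i,\,2+i)$; again $(n-2-i)+(2+i)=n$, so these differentials also emanate from the anti-diagonal $p+q=n$.

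Next I would invoke the hypothesis of the corollary, namely $d_r^{p,q}=0$ whenever $r\ge 1$ and $p+q=n$. Combined with the bookkeeping above, this immediately yields
\[
\bigoplus_{r\ge 1}d_r^{n-1,1}=0
\qquad\text{and}\qquad
\bigoplus_{r-1\ge i\ge 0}d_r^{n-2-i,\,2+i}(X)=0,
\]
which are precisely the hypotheses of Theorem~\ref{thm1.5}. Since $X$ has trivial canonical bundle, Theorem~\ref{thm1.5} applies, and we conclude that the deformations of the complex structure on $X$ are unobstructed.

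The one conceptual point worth highlighting, and the reason the corollary is phrased in terms of total degree $n$ rather than $n+1$, is that the Fr\"olicher differential $d_r^{p,q}$ maps total degree $n$ to total degree $n+1$. Thus the corollary asserts that every outgoing differential from the anti-diagonal $p+q=n$ vanishes; this is strictly stronger than the partial vanishing demanded by Theorem~\ref{thm1.5}, but is often considerably easier to verify in practice and furnishes a clean, uniform sufficient condition for unobstructedness on Calabi--Yau type manifolds.
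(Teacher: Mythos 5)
Your proposal is correct and matches the paper's (implicit) derivation: the paper presents this as an immediate consequence of Theorem~\ref{thm1.5}, and your bidegree bookkeeping — noting that $(n-1)+1=n$ and $(n-2-i)+(2+i)=n$, so the total-degree-$n$ vanishing hypothesis subsumes both conditions of that theorem — is exactly the argument required.
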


For solvable complex parallelisable manifolds, it was shown by Kasuya that the Fr\"olicher spectral sequence degenerates at $E_2$ \cite{Kas15}. In this case, our assumptions in Theorem \ref{thm1.5} simplify to
\[
d_1^{n-1,1}=0
\quad\text{and}\quad
d_1^{n-2,2}=0.
\]
These may be used to analyze the deformation obstructions of the complex parallelizable Nakamura manifold, see Example \ref{ex-Naka}.

In summary, our approach provides a refined, degree-wise Kodaira principle formulated in terms of the Fr\"olicher spectral sequence, and yields new unobstructedness results for Calabi-Yau manifolds beyond the reach of existing criteria.

The article is organized as follows.  
In Section~\ref{FSS}, we review some basic facts about the Fr\"olicher spectral sequence, including several equivalent characterizations of the vanishing of certain differentials.  
In Section~\ref{finite-steps}, we recall the deformation theory of \((p,q)\)-forms developed in \cite{WX2023}, with particular emphasis on the finite-step extension procedure.  
In Section~\ref{Obstructions}, we study obstructions to deforming complex structures and criteria for unobstructedness, and we give the proofs of Theorems~\ref{thm1.3} and~\ref{thm1.5}. In Section~\ref{sec-para}, we study the unobstructedness of deformations of complex parallelisable manifolds.

\section{Fr\"olicher spectral sequence}\label{FSS}

In this section, using the general description of the Fr\"olicher spectral sequence due to Cordero--Fern\'andez--Ugarte--Gray \cite[Theorem~1 and Theorem~3]{CFUG}, we recall an equivalent characterization of the degeneration of the Fr\"olicher spectral sequence at the first page.

Let $X$ be a compact complex manifold of dimension $n$. By definition, the Fr\"olicher spectral sequence of $X$ is the spectral sequence associated with the filtered complex
\[
\bigl(F^\bullet A^\bullet(X),d\bigr), 
\qquad 
F^pA^k(X):=\bigoplus_{\lambda\ge p}A^{\lambda,k-\lambda}(X).
\]
Set
\[
Z^{p,q}_r
:=F^pA^{p+q}\cap d^{-1}\bigl(F^{p+r}A^{p+q+1}\bigr),
\qquad
B^{p,q}_r
:=F^pA^{p+q}\cap d\bigl(F^{p-r}A^{p+q-1}\bigr),
\]
where, for brevity, we write $F^\bullet A^\bullet:=F^\bullet A^\bullet(X)$.  
Then the Fr\"olicher spectral sequence is given by
\[
E_r^{p,q}(X)
=Z^{p,q}_r\big/\bigl(Z^{p+1,q-1}_{r-1}+B^{p,q}_{r-1}\bigr).
\]

By \cite[Theorem~1]{CFUG}, the groups $E_r^{p,q}(X)$ admit the description
\[
E_r^{p,q}(X)=\frac{\widetilde{Z}_r^{p,q}(X)}{\widetilde{B}_r^{p,q}(X)},
\]
where
\[
\widetilde{Z}_1^{p,q}(X)=A^{p,q}(X)\cap\ker\bar\partial,
\qquad
\widetilde{B}_1^{p,q}(X)=\bar\partial\bigl(A^{p,q-1}(X)\bigr),
\]
and, for $r\ge2$,
\begin{align*} \begin{split} \wt{Z}^{p,q}_r(X)=&\{\alpha^{p,q}\in A^{p,q}(X)|\b{\p}\alpha^{p,q}=0 \text{ and there exist }\\& \alpha^{p+i,q-i}\in A^{p+i,q-i}(X) \text{ such that }\\ & \p\alpha^{p+i-1,q-i+1}+\b{\p}\alpha^{p+i,q-i}=0,\,1\leq i\leq r-1\} \end{split} \end{align*} \begin{align*} \begin{split} \wt{B}^{p,q}_r(X)=&\{\p\beta^{p-1,q}+\b{\p}\beta^{p,q-1}\in A^{p,q}(X)|\text{ there exist}\\ & \beta^{p-i,q+i-1}\in A^{p-i,q+i-1}(X),\,2\leq i\leq r-1,\\ &\text{ satisfying } \p\beta^{p-i,q+i-1}+\b{\p}\beta^{p-i+1,q+i-2}=0,\\ &\b{\p}\beta^{p-r+1,q+r-2}=0\}. \end{split} \end{align*}In particular,
\[
E_1^{p,q}(X)\cong
\frac{A^{p,q}(X)\cap\ker\bar\partial}{\bar\partial\bigl(A^{p,q-1}(X)\bigr)}
=H^{p,q}_{\bar\partial}(X),
\]
and
\[
E_2^{p,q}(X)
\cong
\frac{\{\alpha^{p,q}\in A^{p,q}(X)\mid 0=\bar\partial\alpha^{p,q}=\partial\alpha^{p,q}+\bar\partial\alpha^{p+1,q-1}\}}
{\{\partial\beta^{p-1,q}+\bar\partial\beta^{p,q-1}\mid 0=\bar\partial\beta^{p-1,q}\}}.
\]

By \cite[Theorem~3]{CFUG}, for $r\ge2$ the differential
\[
d_r^{p,q}\colon E_r^{p,q}(X)\longrightarrow E_r^{p+r,q-r+1}(X)
\]
is given by
\begin{equation}\label{d-operator}
  d_r^{p,q}[\alpha^{p,q}] = [\,\partial\alpha^{p+r-1,q-r+1}\,],
\end{equation}
for $[\alpha^{p,q}]\in E_r^{p,q}(X)$. Moreover,
\[
E_{r+1}^{p,q}(X)
\equiv \frac{\widetilde{Z}_{r+1}^{p,q}(X)}{\widetilde{B}_{r+1}^{p,q}(X)}
\equiv
\frac{\ker\bigl(d_r\colon E_r^{p,q}(X)\to E_r^{p+r,q-r+1}(X)\bigr)}
{d_r\bigl(E_r^{p-r,q+r-1}(X)\bigr)}.
\]
For $r=1$ we have $E_1^{p,q}(X)\equiv H^{p,q}_{\bar\partial}(X)$, and the map
\[
d_1^{p,q}\colon E_1^{p,q}(X)\longrightarrow E_1^{p+1,q}(X)
\]
is induced by
\[
\partial\colon H^{p,q}_{\bar\partial}(X)\longrightarrow H^{p+1,q}_{\bar\partial}(X).
\]

\begin{defn}
We say that the Fr\"olicher spectral sequence $\{E_r,d_r\}$ degenerates at $E_1$ if 
\[
d_r^{p,q}\colon E_r^{p,q}(X)\longrightarrow E_r^{p+r,q-r+1}(X)
\]
vanishes for all $r\ge1$ and all $p,q$.
\end{defn}

If the Fr\"olicher spectral sequence $\{E_r,d_r\}$ degenerates at $E_1$, then
\[
E_1^{p,q}\cong E_2^{p,q}\cong\cdots\cong E_\infty^{p,q}
=\mathrm{gr}^p_F H^{p+q}_{\mathrm{dR}}(X)
\quad\text{for all }p,q.
\]

Similarly, for fixed bidegrees or total degree we have the following notions.

\begin{defn}
We say that the Fr\"olicher spectral sequence $\{E_r,d_r\}$ degenerates at $E_{r_0}$ \emph{in bidegree $(p,q)$} if
\[
d_r^{p,q}\colon E_r^{p,q}(X)\longrightarrow E_r^{p+r,q-r+1}(X)
\]
vanishes for all $r\geq r_0$.

We say that $\{E_r,d_r\}$ degenerates at $E_{r_0}$ \emph{in total degree $k$} if
\[
d_r^{p,q}\colon E_r^{p,q}(X)\longrightarrow E_r^{p+r,q-r+1}(X)
\]
vanishes for all $r\geq r_0$ and all $p,q$ with $p+q=k$.
\end{defn}

For the degeneration at $E_1$ in a fixed bidegree $(p,q)$, we have the following characterization.

\begin{lemma}[{\cite[Proposition~B.3]{WX2023}}]\label{cordeg1}
The following are equivalent:
\begin{itemize}
  \item[(i)] The Fr\"olicher spectral sequence degenerates at $E_1$ for $(p,q)$-forms, i.e.
  \[
    d_r^{p,q}=0 \quad \text{for all } r\ge1.
  \]
  \item[(ii)] For every $\alpha^{p,q}\in A^{p,q}(X)\cap\ker\bar\partial$, there exists $x\in \ker d\cap F^pA^{p+q}(X)$ such that $x^{p,q}=\alpha^{p,q}$.
  \item[(iii)] One has
  \[
    F^{p+1}A^{p+1+q}(X)\cap dF^pA^{p+q}(X)
    = dF^{p+1}A^{p+q}(X).
  \]
\end{itemize}
\end{lemma}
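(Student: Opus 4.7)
The plan is to prove (ii) $\Leftrightarrow$ (iii) by a direct filtration argument, and then to prove (i) $\Leftrightarrow$ (ii) by unwinding the descriptions of $\widetilde Z_r^{p,q}$ and the formula $d_r^{p,q}[\alpha^{p,q}] = [\partial \alpha^{p+r-1,q-r+1}]$ for the spectral sequence differential.

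For (ii) $\Leftrightarrow$ (iii), the key observation is that for $x = \sum_{i\ge 0} x^{p+i,q-i} \in F^p A^{p+q}$, the $(p,q+1)$-component of $dx$ is $\bar\partial x^{p,q}$, so $dx \in F^{p+1}A^{p+q+1}$ if and only if $\bar\partial x^{p,q} = 0$. For (iii) $\Rightarrow$ (ii), given a $\bar\partial$-closed $\alpha^{p,q}$, take $x = \alpha^{p,q} \in F^p A^{p+q}$; then $dx = \partial\alpha^{p,q} \in F^{p+1}A^{p+q+1}$, so by (iii) there exists $x' \in F^{p+1}A^{p+q}$ with $dx' = dx$, and $y := x - x'$ satisfies $dy = 0$ with $y^{p,q} = \alpha^{p,q}$. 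Conversely, for (ii) $\Rightarrow$ (iii), given $x \in F^p A^{p+q}$ with $dx \in F^{p+1}A^{p+q+1}$ (so $\bar\partial x^{p,q} = 0$), apply (ii) to get a $d$-closed $y \in F^p A^{p+q}$ with $y^{p,q} = x^{p,q}$; then $x - y \in F^{p+1}A^{p+q}$ and $d(x-y) = dx$.

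For (i) $\Leftrightarrow$ (ii), the point is that a $d$-closed extension in $F^p A^{p+q}$ of a $\bar\partial$-closed $\alpha^{p,q}$ is precisely a chain $\alpha^{p+i,q-i}$ satisfying $\partial\alpha^{p+i-1,q-i+1} + \bar\partial\alpha^{p+i,q-i} = 0$ at every step (terminating with $\partial$-closedness of the top term), which is equivalent to $\alpha^{p,q} \in \widetilde Z_r^{p,q}$ for all $r$. We show by induction on $r$ that $A^{p,q} \cap \ker\bar\partial \subseteq \widetilde Z_{r+1}^{p,q}$: given an extension $\alpha^{p+1,q-1}, \ldots, \alpha^{p+r-1,q-r+1}$ produced inductively, the vanishing of $d_r^{p,q}$ yields $\partial\alpha^{p+r-1,q-r+1} = \partial\beta^{p+r-1,q-r+1} + \bar\partial\beta^{p+r,q-r}$ for auxiliary forms $\beta^{p+k,q-k}$, $1 \le k \le r$, satisfying the same compatibility relation $\partial\beta^{p+k-1,q-k+1} + \bar\partial\beta^{p+k,q-k} = 0$ (for $k \ge 2$) together with $\bar\partial\beta^{p+1,q-1} = 0$. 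Replacing each $\alpha^{p+k,q-k}$ by $\alpha^{p+k,q-k} - \beta^{p+k,q-k}$ for $1 \le k \le r-1$ and adjoining $-\beta^{p+r,q-r}$ produces an $r$-step extension of $\alpha^{p,q}$, confirming $\alpha^{p,q} \in \widetilde Z_{r+1}^{p,q}$. For the converse (ii) $\Rightarrow$ (i), any $d$-closed extension furnishes a representative with $\partial\alpha^{p+r-1,q-r+1} = -\bar\partial\alpha^{p+r,q-r} \in \widetilde B_r^{p+r,q-r+1}$, hence $d_r^{p,q} = 0$. The main bookkeeping obstacle is checking that the indices in the $\beta$-chain defining $\widetilde B_r^{p+r,q-r+1}$ line up exactly with those in the $\alpha$-chain so that the substitution $\alpha \mapsto \alpha - \beta$ preserves every compatibility relation, particularly at $k = 1$ where one uses precisely the additional condition $\bar\partial\beta^{p+1,q-1} = 0$; this is routine once the shift in indexing is made explicit.
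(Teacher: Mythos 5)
Your proof is correct. Note that the paper itself does not prove this lemma --- it is imported verbatim from \cite[Proposition~B.3]{WX2023} --- so there is no in-paper argument to compare against; your route (proving (ii)$\Leftrightarrow$(iii) by the elementary filtration/leading-term argument, and (i)$\Leftrightarrow$(ii) by an induction on $r$ through the Cordero--Fern\'andez--Ugarte--Gray description of $\widetilde{Z}_r^{p,q}$, $\widetilde{B}_r^{p,q}$ and $d_r^{p,q}[\alpha^{p,q}]=[\partial\alpha^{p+r-1,q-r+1}]$) is exactly the machinery the paper recalls in Section~2 for this purpose, and your index bookkeeping for the correction chain $\alpha^{p+k,q-k}\mapsto\alpha^{p+k,q-k}-\beta^{p+k,q-k}$ checks out, including the use of $\bar\partial\beta^{p+1,q-1}=0$ at the bottom step.
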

We also have the following equivalence.
\begin{lemma}[{\cite[Corollary B.6]{WX2023}}]\label{cordeg2}
The following are equivalent:
\begin{itemize}
  \item[(i)] For every $r\geq 1$ we have
  \[
    \bigoplus_{r-1 \geq i \geq 0} d_r^{p-i,\,q+i}(X) = 0.
  \]
  \item[(ii)] We have
  \[
    F^{p+1}A^{p+q+1}(X)\cap dA^{p+q}(X)
    \;=\;
    dF^{p+1}A^{p+q}(X).
  \]
\end{itemize}
\end{lemma}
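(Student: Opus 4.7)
The plan is to prove both implications separately, working throughout with the CFUG chain description of the Fr\"olicher spectral sequence terms.

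For (ii) $\Rightarrow$ (i), I would start with an arbitrary class $[\alpha^{p-i,q+i}] \in E_r^{p-i,q+i}$ with $0 \le i \le r-1$, represented by a CFUG chain $\alpha^{p-i+j,q+i-j}$, $0 \le j \le r-1$. Assembling $\alpha := \sum_j \alpha^{p-i+j,q+i-j}$, the chain conditions collapse $d\alpha$ to $\partial\alpha^{p-i+r-1,q+i-r+1}$, which lies in $F^{p-i+r}A^{p+q+1} \subseteq F^{p+1}A^{p+q+1}$ thanks to $i \le r-1$. Invoking (ii) produces $\beta \in F^{p+1}A^{p+q}$ with $d\beta = d\alpha$, so $\gamma := \alpha - \beta$ is $d$-closed, lies in $F^{p-i}A^{p+q}$, and agrees with $\alpha$ in bidegree $(p-i,q+i)$. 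Using $\{\gamma^{p-i+j,q+i-j}\}$ as an equivalent chain representative and reading off $d\gamma=0$ in bidegree $(p-i+r,q+i-r+1)$ yields $\partial\gamma^{p-i+r-1,q+i-r+1}=-\bar\partial\gamma^{p-i+r,q+i-r}$; this representative sits in $\wt{B}_r^{p-i+r,q+i-r+1}$ via the trivial chain (all entries zero except the top slot $-\gamma^{p-i+r,q+i-r}$, whose chain conditions hold tautologically). Hence $d_r^{p-i,q+i}[\alpha^{p-i,q+i}]=0$, proving (i).

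For (i) $\Rightarrow$ (ii), the strategy is a stepwise construction. Given $\alpha \in A^{p+q}$ with $d\alpha \in F^{p+1}$, I would seek a $d$-closed $\eta \in A^{p+q}$ whose bidegree components $\eta^{c,p+q-c}$ coincide with $\alpha^{c,p+q-c}$ for all $c \le p$; then $\beta := \alpha - \eta$ lies in $F^{p+1}A^{p+q}$ and satisfies $d\beta = d\alpha$. The condition $d\alpha \in F^{p+1}$ gives the CFUG chain relations of length $p+1$ on $\alpha^{0,p+q},\dots,\alpha^{p,q}$, so $[\alpha^{0,p+q}] \in \wt{Z}^{0,p+q}_{p+1}$. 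I plan to extend the chain one bidegree at a time: assuming $\eta^{c',p+q-c'}$ have been defined for all $c' \le k$ (with $k \ge p$), the prescribed initial values intact and the chain relations in force through bidegree $(k,p+q-k)$, the next component $\eta^{k+1,p+q-k-1}$ exists precisely when $[\partial\eta^{k,p+q-k}]$ vanishes in $E_{k+1}^{k+1,p+q-k}$. This class coincides with $d_{k+1}^{0,p+q}[\alpha^{0,p+q}]$, which is zero by (i) with $i=p$, $r=k+1 \ge p+1$. The resulting $\wt{B}_{k+1}$-witness furnishes $\eta^{k+1,p+q-k-1}$ together with a candidate set of corrections to lower chain slots.

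The principal obstacle, and where the full diagonal strength of (i) is used, is arranging that these corrections do not alter the prescribed components $\eta^{c',p+q-c'}=\alpha^{c',p+q-c'}$ for $c' \le p$. I plan to handle this via a nested induction: any correction proposed at height $c' \le p$ is itself expressible, through condition (i) at the shifted bidegree $(p-i,q+i)$ with $i = p-c'$ on the appropriate page, as a $\wt{B}$-witness one level lower, thereby redistributing the correction to still lower slots; iterating, the cascade terminates cleanly through the boundary condition $\bar\partial(\cdot)=0$ at the bottom slot. Verifying that this cascade is well-defined within the range allowed by condition (i) is the subtle bookkeeping step. Once settled, the iterative extension produces the desired $\eta$, completing the proof of (ii).
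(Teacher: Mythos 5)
The paper does not actually prove this lemma --- it is quoted from \cite[Corollary~B.6]{WX2023} --- so your argument has to stand on its own. Your direction (ii)~$\Rightarrow$~(i) is correct and complete: assembling the CFUG chain into $\alpha\in F^{p-i}A^{p+q}$ with $d\alpha=\partial\alpha^{p-i+r-1,q+i-r+1}\in F^{p+1}A^{p+q+1}\cap dA^{p+q}$, subtracting the $\beta\in F^{p+1}A^{p+q}$ supplied by (ii), and reading off the relevant component of $d\gamma=0$ indeed places $\partial\gamma^{p-i+r-1,q+i-r+1}$ in $\bar\partial A^{p-i+r,q+i-r}\subseteq \wt{B}_r^{p-i+r,q+i-r+1}$, so $d_r^{p-i,q+i}=0$.

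The direction (i)~$\Rightarrow$~(ii) has a genuine gap at precisely the point you defer as ``subtle bookkeeping''. When you pass from level $k$ to $k+1$, the vanishing of $d_{k+1}^{0,p+q}[\alpha^{0,p+q}]$ only gives $\partial\eta^{k,p+q-k}\in\wt{B}_{k+1}^{k+1,p+q-k}$, whose witness is a chain $\beta^{1,p+q-1},\dots,\beta^{k+1,p+q-k-1}$ that must be subtracted from every slot down to level~$1$, corrupting the frozen slots $1,\dots,p$. Your proposed repair --- that a correction at height $c'\le p$ is ``expressible, through condition (i) at the shifted bidegree $(p-i,q+i)$ with $i=p-c'$, as a $\wt{B}$-witness one level lower'' --- does not typecheck: for $c'\ge 2$ the witness slot $\beta^{c',p+q-c'}$ satisfies $\bar\partial\beta^{c',p+q-c'}=-\partial\beta^{c'-1,p+q-c'+1}\neq 0$ in general, so it is not $\bar\partial$-closed, represents no class in any $E_r^{c',p+q-c'}$, and condition (i) cannot be applied to it; only the bottom slot $\beta^{1,p+q-1}$ carries a spectral-sequence class. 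A clean argument inverts the induction and runs downward on the filtration level of $\alpha$ itself: if $\alpha\in F^{c}A^{p+q}$ with $d\alpha\in F^{p+1}A^{p+q+1}$ and $c\le p$, then $\alpha^{c,p+q-c}\in\wt{Z}_{p-c+1}^{c,p+q-c}$, and the diagonal entries $d_r^{c,p+q-c}=0$ for all $r\ge p-c+1$ (the $i=p-c$ part of (i)) force $\alpha^{c,p+q-c}\in\wt{Z}_{\infty}^{c,p+q-c}+\wt{B}_{N}^{c,p+q-c}$ for $N$ large. The $\wt{Z}_\infty$ part extends to a $d$-closed form in $F^{c}A^{p+q}$, while the $\wt{B}_N$ part is the lowest nonzero component of $d$ applied to the sum of its witness chain, which lies in $F^{c}A^{p+q}$; subtracting both replaces $\alpha$ by an element of $F^{c+1}A^{p+q}$ with the same differential. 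Iterating from $c=0$ to $c=p$ proves (ii), using each entry of (i) once and requiring no cascade.
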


\section{Deformations of $(p,q)$-forms for finite steps}\label{finite-steps}

In this section we describe a finite-step extension procedure for \((p,q)\)-forms.  
A systematic treatment of extensions of \((p,q)\)-forms was given in our previous work \cite{WX2023}; here we present a slightly simplified argument, focusing on the finite-step extensions needed in the present paper.

Let \(\pi\colon \mathcal{X} \to \Delta\) be a holomorphic family of compact complex manifolds with central fibre \(X=X_0\), where \(\dim X_0 = n\).  
Let \(\phi = \phi(t) \in A^{0,1}(X_0,T^{1,0})\) be the Beltrami differential associated with the complex structure of \(\mathcal{X}_t\), \(t \in \Delta\).  
By construction, \(\phi(t)\) depends holomorphically on \(t\) and satisfies \(\phi(0)=0\).  
We are naturally led to consider a \(\bar{\partial}\)-closed \((p,q)\)-form \(\alpha_0^{p,q} \in A^{p,q}(X_0)\cap\ker\bar{\partial}\) on \(X_0\), and ask under what conditions it admits an unobstructed extension to nearby fibres, i.e.\ whether we can find a continuous family
\[
\beta(t)\in A^{p,q}(X_t)\cap\ker\bar{\partial}_t,\qquad t\in\Delta,
\]
with initial condition \(\beta(0)=\alpha_0^{p,q}\).

Our approach is as follows.  
For \(|t|\) sufficiently small, the operators
\[
e^{i_{(1-\bar\phi\phi)^{-1}\bar\phi}}
\quad\text{and}\quad
e^{-i_\phi}
\]
are invertible, where \(e^{-i_\phi} = \sum_{k=0}^{\infty}\frac{(-i_\phi)^k}{k!}\) is the inverse of \(e^{i_\phi}\).  
For any \(\widetilde{\alpha}\in A^{p,q}(X_0)\), we have
\[
e^{i_\phi\mid i_{\bar\phi}}(\widetilde{\alpha})
=
e^{i_\phi}\circ
e^{i_{(1-\bar\phi\phi)^{-1}\bar\phi}}\circ
e^{-i_{(1-\bar\phi\phi)^{-1}\bar\phi}}\circ
e^{-i_\phi}\circ
e^{i_\phi\mid i_{\bar\phi}}(\widetilde{\alpha}).
\]
Set
\[
\alpha
:=
e^{-i_{(1-\bar\phi\phi)^{-1}\bar\phi}}\circ
e^{-i_\phi}\circ
e^{i_\phi\mid i_{\bar\phi}}(\widetilde{\alpha}).
\]
Since \(e^{i_{(1-\bar\phi\phi)^{-1}\bar\phi}}\), \(e^{i_\phi}\) and \(e^{i_\phi\mid i_{\bar\phi}}\) are invertible for \(|t|\) small, the correspondence between \(\widetilde{\alpha}\) and \(\alpha\) is bijective.  
Moreover, it is straightforward to check that the operator
\[
e^{-i_{(1-\bar\phi\phi)^{-1}\bar\phi}}\circ
e^{-i_\phi}\circ
e^{i_\phi\mid i_{\bar\phi}}
\]
preserves the bidegree of forms, so that \(\alpha\) is again a \((p,q)\)-form on \(X_0\).  
Indeed, for any \((p,q)\)-form \(\widetilde{\alpha}\) on \(X_0\), we obtain
\begin{align*}
\begin{split}
e^{-i_{(1-\bar\phi\phi)^{-1}\bar\phi}}\circ e^{-i_\phi}\circ e^{i_\phi\mid i_{\bar\phi}}(\widetilde{\alpha})
&=
\widetilde{\alpha}_{i_1\cdots i_p\bar{j}_1\cdots\bar{j}_q}\,
dz^{i_1}\wedge\cdots\wedge dz^{i_p}\\
&\phantom{=} \wedge
(1-\bar\phi\phi)\lrcorner\, d\bar z^{j_1}\wedge\cdots\wedge
(1-\bar\phi\phi)\lrcorner\, d\bar z^{j_q}
\in A^{p,q}(X_0),
\end{split}
\end{align*}
where
\[
\widetilde{\alpha}
=
\widetilde{\alpha}_{i_1\cdots i_p\bar{j}_1\cdots\bar{j}_q}\,
dz^{i_1}\wedge\cdots\wedge dz^{i_p}\wedge
d\bar z^{j_1}\wedge\cdots\wedge d\bar z^{j_q}.
\]
Thus \(\alpha\) is still of type \((p,q)\), since \((1-\bar\phi\phi)\lrcorner\) preserves bidegree.

Denote
\[
\bar{\partial}_\phi := \bar{\partial} + [\partial,i_\phi].
\]
From \cite[(20)]{MR4425290}, we have
\[
\bar{\partial}_t\bigl(e^{i_\phi\mid i_{\bar\phi}}(\widetilde{\alpha})\bigr)
=
e^{i_\phi\mid i_{\bar\phi}}
\bigl((1-\bar\phi\phi)^{-1}\,\Finv\bar{\partial}_\phi\alpha\bigr).
\]
Hence \(\bar{\partial}_t\bigl(e^{i_\phi\mid i_{\bar\phi}}(\widetilde{\alpha})\bigr)=0\) if and only if
\begin{equation}\label{eq:phi-equation}
\bar{\partial}_\phi \alpha = 0.
\end{equation}

Our goal is therefore to find a family \(\alpha^{p,q}(t)\in A^{p,q}(X_0)\) solving
\begin{equation}\label{obs1}
\bar{\partial}_\phi \alpha^{p,q}(t) = 0,
\qquad
\alpha^{p,q}(0) = \alpha^{p,q}_0,
\end{equation}
for any \(\alpha^{p,q}_0\in A^{p,q}(X_0)\cap\ker\bar{\partial}\).  
If we can solve \eqref{obs1} for \(\alpha^{p,q}(t)\), then
\[
\widetilde{\alpha}^{p,q}(t)
:=
e^{-i_\phi\mid i_{\bar\phi}}\circ
e^{i_\phi}\circ
e^{i_{(1-\bar\phi\phi)^{-1}\bar\phi}}(\alpha^{p,q}(t))
\in A^{p,q}(X_0),
\]
and
\[
\beta(t)=e^{i_\phi\mid i_{\bar\phi}}\bigl(\widetilde{\alpha}^{p,q}(t)\bigr)
\in A^{p,q}(X_t)\cap\ker\bar{\partial}_t
\]
provides a smooth extension of \(\alpha_0^{p,q}\) to the nearby fibres \(X_t\).

In \cite{WX2023}, we carried out a systematic study of the solvability of \eqref{obs1}.  
Here we are interested in a finite-order version of that problem.

Assume that
\[
  \phi = \sum_{i=1}^N \phi_i
\]
and that $\phi$ satisfies the integrability condition
\[
  \bar{\partial}\phi = \tfrac{1}{2}[\phi,\phi].
\]
We seek a family $\alpha^{p,q}(t)\in A^{p,q}(X_0)$ such that
\begin{equation}\label{obs11}
  \bar{\partial}_\phi \alpha^{p,q}(t) = 0 \ \text{mod } t^{N+1},
  \qquad
  \alpha^{p,q}(0) = \alpha^{p,q}_0,
\end{equation}
for a given $\alpha^{p,q}_0\in A^{p,q}(X_0)\cap\ker\bar{\partial}$.

To solve \eqref{obs11}, we look for
\[
  \alpha(t)
  = \alpha^{p,q}(t) + \cdots + \alpha^{p+n,q-n}(t)
  \in F^p A^{p+q}(X_0),
\]
holomorphic in $t$, such that
\begin{equation}\label{obs44}
  d\bigl(e^{i_\phi}(\alpha(t))\bigr) = 0 \ \text{mod } t^{N+1},
  \qquad
  \alpha^{p,q}(0) = \alpha^{p,q}_0.
\end{equation}
Indeed, if \eqref{obs44} holds, then using the identity
\[
  d \circ e^{i_\phi}
  = e^{i_\phi}\bigl(\bar{\partial}_\phi + \partial
     + i_{\bar{\partial}\phi - \frac{1}{2}[\phi,\phi]}\bigr),
\]
and the integrability condition $\bar{\partial}\phi = \tfrac{1}{2}[\phi,\phi]$, we obtain that \eqref{obs44} is equivalent to
\[
  (\bar{\partial}_\phi + \partial)\,\alpha(t) = 0 \ \text{mod } t^{N+1}.
\]
Writing $\alpha(t)$ by type as above, this is equivalent to the triangular system
\begin{align*}
\begin{split}
  \begin{cases}
    \bar{\partial}_\phi \alpha^{p,q}(t) &= 0 \ \text{mod } t^{N+1},\\[2pt]
    \bar{\partial}_\phi \alpha^{p+1,q-1}(t) + \partial\alpha^{p,q}(t)
      &= 0 \ \text{mod } t^{N+1},\\
    \quad\vdots\\
    \bar{\partial}_\phi \alpha^{p+n,q-n}(t)
      + \partial \alpha^{p+n-1,q-n+1}(t)
      &= 0 \ \text{mod } t^{N+1},
  \end{cases}
\end{split}
\end{align*}
and the first equation is precisely \eqref{obs11}.

We now expand $\alpha(t)$ as a finite power series
\[
  \alpha(t) = \sum_{k=0}^N \alpha_k,
\]
where $\alpha_k$ denotes the homogeneous term of order $k$ in $t$.  
For later use, we also set
\[
  \alpha_{\le N_0} := \sum_{k=0}^{N_0} \alpha_k.
\]

\begin{lemma}\label{lemma11}
For any $N_0\le N$ one has
\[
\bigl(d\circ e^{i_\phi}(\alpha(t))\bigr)_{\le N_0}=0
\quad\Longleftrightarrow\quad
\bigl((\bar\partial_\phi+\partial)\alpha(t)\bigr)_{\le N_0}=0.
\]
\end{lemma}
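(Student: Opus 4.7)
The plan is to reduce the equivalence to the invertibility of $e^{i_\phi}$ on truncated power series in $t$, using the master identity $d\circ e^{i_\phi}=e^{i_\phi}\bigl(\bar\partial_\phi+\partial+i_{\bar\partial\phi-\frac12[\phi,\phi]}\bigr)$ recalled just before the statement. Substituting the integrability condition $\bar\partial\phi=\frac12[\phi,\phi]$ kills the last term and produces the clean identity
\[
d\bigl(e^{i_\phi}(\alpha(t))\bigr)=e^{i_\phi}\bigl((\bar\partial_\phi+\partial)\alpha(t)\bigr).
\]
Setting $\xi(t):=(\bar\partial_\phi+\partial)\alpha(t)$, the lemma is therefore equivalent to the purely formal statement that
\[
\bigl(e^{i_\phi}\xi(t)\bigr)_{\le N_0}=0\quad\Longleftrightarrow\quad \xi(t)_{\le N_0}=0.
\]

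To establish this last equivalence I would exploit the fact that $\phi=\sum_{i=1}^N\phi_i$ has no order-zero term (since $\phi(0)=0$), so that contraction by $\phi$ strictly raises the $t$-order: if $\beta$ contains only $t$-terms of order $\ge k$, then $i_\phi(\beta)$ contains only $t$-terms of order $\ge k+1$. It follows that the formal series $e^{i_\phi}=\sum_{k\ge0}\tfrac{(i_\phi)^k}{k!}$ acts as a polynomial operator on each fixed truncation, preserves the filtration by $t$-order, and admits a formal two-sided inverse $e^{-i_\phi}$ with the same properties. The forward implication is then immediate: if $\xi(t)=O(t^{N_0+1})$ then $e^{i_\phi}\xi(t)=O(t^{N_0+1})$. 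For the reverse implication, applying $e^{-i_\phi}$ to any $e^{i_\phi}\xi(t)$ of order $O(t^{N_0+1})$ recovers $\xi(t)=O(t^{N_0+1})$.

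There is no serious obstacle here; the lemma is essentially a bookkeeping consequence of the identity above combined with the observation that $i_\phi$ is strictly filtration-raising. The only point requiring a bit of care is that one should not invoke analytic convergence of $e^{\pm i_\phi}$ for $t\ne 0$, but instead work entirely at the level of truncated $t$-power series, which is the natural setting for the finite-step extension problem under consideration in this section.
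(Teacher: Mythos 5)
Your proof is correct and follows essentially the same route as the paper's: conjugate $d$ by $e^{i_\phi}$, use integrability to discard the term $i_{\bar\partial\phi-\frac{1}{2}[\phi,\phi]}$, and observe that $e^{\pm i_\phi}$ preserve vanishing of truncations because $i_\phi$ strictly raises the $t$-order. The only nuance is that for an $N$-th order deformation the integrability condition (hence your ``clean identity'') holds only mod $t^{N+1}$, which is exactly how the paper states it; since $N_0\le N$ this does not affect the argument.
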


\begin{proof}
By the identity
\[
e^{-i_\phi}\circ d\circ e^{i_\phi}
= \bar\partial_\phi + \partial
\quad\text{mod } t^{N+1},
\]
we obtain
\[
e^{-i_\phi}\circ d\circ e^{i_\phi}(\alpha(t))
= (\bar\partial_\phi + \partial)\alpha(t)
\quad\text{up to terms of order } \ge N+1.
\]

If $\bigl(d\circ e^{i_\phi}(\alpha(t))\bigr)_{\le N_0}=0$, then applying $e^{-i_\phi}$ and truncating to order $\le N_0$ gives
\[
0
= \bigl(e^{-i_\phi}\circ d\circ e^{i_\phi}(\alpha(t))\bigr)_{\le N_0}
= \bigl((\bar\partial_\phi + \partial)\alpha(t)\bigr)_{\le N_0}.
\]

Conversely, if
\[
\bigl((\bar\partial_\phi + \partial)\alpha(t)\bigr)_{\le N_0}=0,
\]
then applying $e^{i_\phi}$ and truncating yields
\[
0
= \bigl(e^{i_\phi}(\bar\partial_\phi + \partial)\alpha(t)\bigr)_{\le N_0}
= \bigl(d\circ e^{i_\phi}(\alpha(t))\bigr)_{\le N_0}.
\]
This proves the equivalence.
\end{proof}
In the following, we will use $\alpha=\alpha(t)$ for simplicity, and denote by $\bullet^{p,q}$ the $(p,q)$-component of $\bullet\in A^*(X)$.
\begin{lemma}\label{lemma2.2}
We have
\begin{itemize}
  \item[(i)]If $(d(e^{i_\phi}(\alpha)))_{\leq N-1}=0$, then 
	\begin{equation}\label{equ3}
 \Pi^{\leq p-1,*} ( d\circ (e^{i_{\phi}}-1)(\alpha))_{\leq N}=0.
\end{equation}
\item[(ii)]If $(d(e^{i_\phi}(\alpha)))_{\leq N}=0$, one has
\begin{equation}\label{equ33}
 \Pi^{\leq p-2,*} ( d\circ (e^{i_{\phi}}-1)(\alpha))_{N+1}=0.
\end{equation}
\end{itemize}
\end{lemma}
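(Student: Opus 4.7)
The strategy is to exploit the operator identity
\begin{equation*}
d\circ e^{i_\phi} = e^{i_\phi}\bigl(\bar\partial_\phi + \partial + i_{\bar\partial\phi - \frac{1}{2}[\phi,\phi]}\bigr)
\end{equation*}
already used in the proof of Lemma~\ref{lemma11}, combined with the crucial bidegree observation that the operator $\bar\partial_\phi+\partial$ preserves the Hodge filtration: if $\alpha\in F^pA^{p+q}$, then $(\bar\partial_\phi+\partial)\alpha\in F^pA^{p+q+1}$. Indeed, $\partial$ and $\bar\partial$ both preserve $F^p$, and in $[\partial,i_\phi]$ the filtration-lowering effect of $i_\phi$ is exactly compensated by the filtration-raising effect of $\partial$.

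I would then set $\gamma:=(\bar\partial_\phi+\partial)\alpha$ and $\xi:=\bar\partial\phi-\tfrac{1}{2}[\phi,\phi]$ and decompose $d(e^{i_\phi}\alpha)=e^{i_\phi}\gamma+e^{i_\phi}(i_\xi\alpha)$. The integrability hypothesis forces $\xi$ to vanish to order $\geq N+1$ in $t$, so the second summand is invisible at every order $m\leq N$. Under the hypothesis of (i) this yields $(e^{i_\phi}\gamma)_{\leq N-1}=0$, and an elementary triangular inversion (order by order in $t$, using that each factor $i_\phi$ costs at least one power of $t$) gives $\gamma_m=0$ for $m\leq N-1$; the hypothesis of (ii) upgrades this to $\gamma_m=0$ for $m\leq N$.

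Finally, writing $d((e^{i_\phi}-1)\alpha)=d(e^{i_\phi}\alpha)-d\alpha$ and extracting the next order, the vanishing of $\gamma_m$ at low orders collapses $(e^{i_\phi}\gamma)_N$ to just $\gamma_N$, and a short computation gives
\begin{equation*}
\bigl(d((e^{i_\phi}-1)\alpha)\bigr)_N = \gamma_N - d\alpha_N = \sum_{i=1}^{N}[\partial,i_{\phi_i}]\alpha_{N-i}\in F^pA^{p+q+1},
\end{equation*}
proving (i) after applying $\Pi^{\leq p-1,*}$. The analogous computation at order $N+1$, where now $\alpha_{N+1}=0$, picks up one extra term $i_{\xi_{N+1}}\alpha_0$ coming from $e^{i_\phi}(i_\xi\alpha)$; this term lies in $F^{p-1}$ (since $i_\xi$ lowers the filtration by only one), so both summands of $\bigl(d((e^{i_\phi}-1)\alpha)\bigr)_{N+1}$ are annihilated by $\Pi^{\leq p-2,*}$.

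The only real obstacle is the bookkeeping between the power-series order in $t$ and the Hodge filtration: the extra order of vanishing granted by projecting onto a strictly deeper piece of $F^\bullet$ must be paid for by an additional factor of $i_\phi$ or $i_\xi$, which in turn costs at least one (respectively $N+1$) powers of $t$. Once this trade-off is made explicit, both claims follow from short filtration-counting arguments.
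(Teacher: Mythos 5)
Your argument is correct and essentially identical to the paper's: both rest on the decomposition $d\circ e^{i_\phi}=e^{i_\phi}\bigl(\bar\partial_\phi+\partial+i_{\bar\partial\phi-\frac{1}{2}[\phi,\phi]}\bigr)$, the order-by-order vanishing of $(\bar\partial_\phi+\partial)\alpha$ (your ``triangular inversion'' is precisely Lemma~\ref{lemma11}), the fact that $[\partial,i_\phi]$ preserves $F^p$, and the fact that contraction with $(\bar\partial\phi-\tfrac{1}{2}[\phi,\phi])_{N+1}$ drops the filtration by only one step. The only (trivial) omission is that in (i) you compute explicitly only the top order $N$, whereas the claim concerns all orders $\le N$; for $m\le N-1$ the expression reduces to $-d\alpha_m\in F^pA^{p+q+1}$, which $\Pi^{\leq p-1,*}$ likewise annihilates.
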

\begin{proof}
	Note that 
\begin{align*}
\begin{split}
  d\circ (e^{i_{\phi}}-1)(\alpha)&=\left(e^{i_{\phi}}(\b{\p}_\phi+\p)-d\right)\alpha+e^{i_{\phi}}\circ i_{\b{\p}\phi-\frac{1}{2}[\phi,\phi]}\alpha\\
  &=(e^{i_{\phi}}-1)(\b{\p}_\phi+\p)\alpha+[\p,i_{\phi}]\alpha+e^{i_{\phi}}\circ i_{\b{\p}\phi-\frac{1}{2}[\phi,\phi]}\alpha.
 \end{split}
\end{align*}
If $(d(e^{i_\phi}(\alpha)))_{\leq N_0}=0$, by Lemma \ref{lemma11}, then 
$$((\b{\p}_\phi+\p)\alpha)_{\leq N_0}=0.$$
For any $j\geq 1$ and $N_0\leq N$, one has
\begin{align}\label{equ5}
\begin{split}
  & ( d\circ (e^{i_{\phi}}-1)(\alpha))_{\leq N_0+1}^{p-j,q+j+1}\\
   &=\left((e^{i_{\phi}}-1)(\b{\p}_\phi+\p)\alpha\right)_{\leq N_0+1}^{p-j,q+j+1}+\left([\p,i_{\phi}]\alpha\right)_{\leq N_0+1}^{p-j,q+j+1}\\
   &\quad +(e^{i_{\phi}}\circ i_{\b{\p}\phi-\frac{1}{2}[\phi,\phi]}\alpha)_{\leq N_0+1}^{p-j,q+j+1}\\
   &=\left((e^{i_{\phi}}-1)(\b{\p}_\phi+\p)\alpha\right)_{\leq N_0+1}^{p-j,q+j+1}
  +(e^{i_{\phi}}\circ i_{\b{\p}\phi-\frac{1}{2}[\phi,\phi]}\alpha)_{\leq N_0+1}^{p-j,q+j+1}
 \end{split}
\end{align}
since $([\p,i_{\phi}]\alpha)_{N_0+1}\in F^pA^{p+q+1}(X)$.
\begin{itemize}
  \item  If $(d(e^{i_\phi}(\alpha)))_{\leq N-1}=0$, taking $N_0=N-1$ and $j\geq 1$, we have
  \begin{equation*}
  \left((e^{i_{\phi}}-1)(\b{\p}_\phi+\p)\alpha\right)_{\leq N_0+1}^{p-j,q+j+1}
  =0=(e^{i_{\phi}}\circ i_{\b{\p}\phi-\frac{1}{2}[\phi,\phi]}\alpha)_{\leq N_0+1}^{p-j,q+j+1},
\end{equation*}
and so 
\begin{equation*}
   \Pi^{\leq p-1,*} ( d\circ (e^{i_{\phi}}-1)(\alpha))_{\leq N}=\sum_{j\geq 1}( d\circ (e^{i_{\phi}}-1)(\alpha))_{\leq N_0+1}^{p-j,q+j+1}=0.
\end{equation*}
\item If $(d(e^{i_\phi}(\alpha)))_{\leq N}=0$, taking $N_0=N$ and $j\geq 2$, we also have
   \begin{equation*}
  \left((e^{i_{\phi}}-1)(\b{\p}_\phi+\p)\alpha\right)_{\leq N_0+1}^{p-j,q+j+1}
  =0=(e^{i_{\phi}}\circ i_{\b{\p}\phi-\frac{1}{2}[\phi,\phi]}\alpha)_{\leq N_0+1}^{p-j,q+j+1},
\end{equation*}
and so 
\begin{equation*}
   \Pi^{\leq p-2,*} ( d\circ (e^{i_{\phi}}-1)(\alpha))_{\leq N+1}=\sum_{j\geq 2}( d\circ (e^{i_{\phi}}-1)(\alpha))_{\leq N_0+1}^{p-j,q+j+1}=0.
\end{equation*}
\end{itemize}
The proof is complete.
\end{proof}

\begin{thm}\label{thm11}
Let $X$ be a compact complex manifold, and suppose that
\[
\bigoplus_{r\ge 1} d_r^{p,q}=0
\quad\text{and}\quad
\bigoplus_{\substack{r-1 \ge i\ge 0}} d_r^{p-1-i,\,q+1+i}(X)=0.
\]
Then, for any $\alpha_0^{p,q}\in A^{p,q}(X)\cap \ker\bar\partial$, the equation
\begin{equation}\label{obs22}
  d\bigl(e^{i_\phi}(\alpha(t))\bigr)=0 \;\text{\emph{mod}}\; t^{N+1},
  \qquad
  \alpha(0)^{p,q}=\alpha_0^{p,q},
\end{equation}
admits a smooth solution $\alpha(t)=\sum_{k=0}^N \alpha_k \in F^pA^{p+q}(X)$.
\end{thm}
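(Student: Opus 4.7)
The plan is to construct the solution order by order in $t$, building $\alpha(t)=\sum_{k=0}^N\alpha_k$ inductively so that the partial sum $\alpha_{\le N_0}=\sum_{k=0}^{N_0}\alpha_k$ satisfies $(d(e^{i_\phi}(\alpha_{\le N_0})))_{\le N_0}=0$ at each stage $N_0=0,1,\dots,N$. Via Lemma~\ref{lemma11} this is equivalent to the $(\b\p_\phi+\p)$-formulation, but it is cleaner to work directly with $d\circ e^{i_\phi}$ and exploit the fact that $d$, being fibrewise on $X$, commutes with extraction of the $t$-order-$k$ part. For the base case $N_0=0$ I need $\alpha_0\in F^pA^{p+q}(X)$ with $d\alpha_0=0$ whose $(p,q)$-component is the prescribed $\alpha_0^{p,q}$; this is exactly clause (ii) of Lemma~\ref{cordeg1}, which is equivalent to the first hypothesis $\bigoplus_{r\ge 1}d_r^{p,q}=0$.

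For the inductive step, assume $\alpha_{\le N_0-1}$ has been constructed. Since $\phi$ has no order-zero term and $\alpha_{N_0}$ is of order $N_0$, the only contribution of $\alpha_{N_0}$ to $d(e^{i_\phi}(\alpha_{\le N_0}))$ at order $N_0$ is $d\alpha_{N_0}$, so the equation to solve is
\[
d\alpha_{N_0}=-\Omega_{N_0},\qquad \alpha_{N_0}\in F^pA^{p+q}(X),\qquad
\Omega_{N_0}:=\bigl(d(e^{i_\phi}(\alpha_{\le N_0-1}))\bigr)_{N_0}.
\]
The crux is to show $\Omega_{N_0}\in dF^pA^{p+q}(X)$, and this splits into two claims: \emph{(a)} filtration position, $\Omega_{N_0}\in F^pA^{p+q+1}(X)$; and \emph{(b)} exactness in $A^{p+q}(X)$, $\Omega_{N_0}\in dA^{p+q}(X)$. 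Claim (b) is immediate: because $d$ commutes with $t$-truncation and $e^{i_\phi}$ preserves total degree, $\Omega_{N_0}=d\bigl((e^{i_\phi}(\alpha_{\le N_0-1}))_{N_0}\bigr)$ with argument in $A^{p+q}(X)$. For claim (a), note that $(d\alpha_{\le N_0-1})_{N_0}=0$ since $\alpha_{\le N_0-1}$ has no order-$N_0$ term, whence $\Omega_{N_0}=(d\circ(e^{i_\phi}-1)(\alpha_{\le N_0-1}))_{N_0}$; Lemma~\ref{lemma2.2}(i), applied with $N=N_0$ and with the inductive hypothesis as its assumption, then kills all components of first Hodge index $\le p-1$. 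Combining (a) and (b), $\Omega_{N_0}\in F^pA^{p+q+1}(X)\cap dA^{p+q}(X)$; invoking Lemma~\ref{cordeg2} with its $(p,q)$ replaced by $(p-1,q+1)$---whose hypothesis is \emph{precisely} the second assumption $\bigoplus_{r-1\ge i\ge 0}d_r^{p-1-i,\,q+1+i}(X)=0$---upgrades this to $\Omega_{N_0}\in dF^pA^{p+q}(X)$, and any primitive $\beta\in F^pA^{p+q}(X)$ with $d\beta=\Omega_{N_0}$ provides $\alpha_{N_0}:=-\beta$.

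The main obstacle is claim (a). A priori the exponential $e^{i_\phi}$, via repeated contractions $i_\phi$, can lower the Hodge $p$-index by arbitrarily many units, so $\Omega_{N_0}$ could in principle have components far below $F^p$; without such control, the subsequent appeal to Lemma~\ref{cordeg2} would collapse. Lemma~\ref{lemma2.2}(i) is exactly the machinery that leverages the cumulative inductive vanishing $(d(e^{i_\phi}(\alpha_{\le N_0-1})))_{\le N_0-1}=0$ to cancel every low-filtration contribution up to order $N_0$. Once (a) is in hand, the two Frölicher-spectral-sequence hypotheses of the theorem match, respectively, the base case (via Lemma~\ref{cordeg1}) and the inductive step (via Lemma~\ref{cordeg2}) by a clean index bookkeeping, and the induction closes.
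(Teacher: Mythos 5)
Your proposal is correct and follows essentially the same route as the paper's proof: induction on the $t$-order, with the base case supplied by Lemma~\ref{cordeg1}(ii) under the first hypothesis, the filtration control $\Omega_{N_0}\in F^pA^{p+q+1}(X)$ supplied by Lemma~\ref{lemma2.2}(i) together with the inductive vanishing, and the upgrade from $F^pA^{p+q+1}(X)\cap dA^{p+q}(X)$ to $dF^{p}A^{p+q}(X)$ supplied by Lemma~\ref{cordeg2} applied at bidegree $(p-1,q+1)$. The only differences are cosmetic (an index shift of one in the induction and your explicit separation into claims (a) and (b), which the paper states in a single line).
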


\begin{proof}
We argue by induction on the order $N_0$, $0\le N_0\le N$, solving \eqref{obs22} up to order $N_0$.

\smallskip\noindent
\textbf{Step $0$.}
For $N_0=0$, \eqref{obs22} is equivalent to
\[
d\alpha(0)=0,
\qquad
\alpha(0)^{p,q}=\alpha_0^{p,q}.
\]
Since $\alpha_0^{p,q}\in A^{p,q}(X)\cap\ker\bar\partial$, the assumption $\bigoplus_{r\ge 1} d_r^{p,q}=0$ together with Lemma~\ref{cordeg1} guarantees the existence of $\alpha(0)\in F^pA^{p+q}(X)$ such that $\alpha(0)^{p,q}=\alpha_0^{p,q}$ and $d\alpha(0)=0$.

\smallskip\noindent
\textbf{Inductive step.}
Assume that for some $0\le N_0\le N-1$ we have already constructed
\[
\alpha_{\le N_0}(t):=\sum_{k=0}^{N_0}\alpha_k \in F^pA^{p+q}(X)
\]
such that
\[
d\bigl(e^{i_\phi}(\alpha_{\le N_0}(t))\bigr)=0 \;\text{\emph{mod}}\; t^{N_0+1}
\quad\text{and}\quad
\alpha_0^{p,q}=\alpha(0)^{p,q}.
\]

\smallskip\noindent
\textbf{Step $N_0+1$.}
We now solve \eqref{obs22} at order $N_0+1$.  
Writing $\alpha(t)=\alpha_{\le N_0}(t)+\alpha_{N_0+1}$ and extracting the homogeneous component of order $N_0+1$, the condition \eqref{obs22} becomes
\begin{equation}\label{stepN+1}
  d\bigl(\alpha_{N_0+1}+((e^{i_\phi}-1)\alpha(t))_{N_0+1}\bigr)=0.
\end{equation}
By Lemma~\ref{lemma2.2} (i), the term
\[
d\bigl((e^{i_\phi}-1)\alpha(t)\bigr)_{N_0+1}
\]
belongs to
\[
dA^{p+q}(X)\cap F^pA^{p+q+1}(X).
\]
By the assumption
\[
\bigoplus_{\substack{r-1 \ge i\ge 0}} d_r^{p-1-i,\,q+1+i}(X)=0
\]
and Lemma~\ref{cordeg2}, we have
\[
F^pA^{p+q+1}(X)\cap dA^{p+q}(X)
=
dF^pA^{p+q}(X).
\]
Hence
\[
d\bigl((e^{i_\phi}-1)\alpha(t)\bigr)_{N_0+1}
\in dF^pA^{p+q}(X),
\]
so there exists $\alpha_{N_0+1}\in F^pA^{p+q}(X)$ solving \eqref{stepN+1}. This completes the inductive step.

By induction on $N_0$ from $0$ to $N$, we obtain the desired smooth solution $\alpha(t)=\sum_{k=0}^N\alpha_k\in F^pA^{p+q}(X)$ satisfying \eqref{obs22}.
\end{proof}

\section{Obstructions and the cohomology contractions}\label{Obstructions}

In this section, we briefly review the obstructions to deforming complex structures and the cohomology contraction map. Under suitable vanishing assumptions on the differentials in the Fr\"olicher spectral sequence, we show that these obstructions lie in the kernel of the contraction map. As an application, we generalize these Kodaira principles in \cite{Man} to the level of fixed bidegree \((p,q)\); in comparison with \cite{Man}, our analytic approach yields a more refined Kodaira-type principle.

\subsection{Obstructions to deforming complex structures}

For the classical deformation theory of complex manifolds, we refer the reader to \cite{MR2109686}.  
Following \cite{MR3184164}, a deformation of a compact complex space $X$ is given by a flat proper morphism
\[
\pi \colon (\mathcal{X}, X_0) \longrightarrow (\mathcal{B},0)
\]
between connected complex spaces, together with an isomorphism $\psi \colon X \to X_0$, where $X_0 := \pi^{-1}(0)$.  
A germ of such a deformation is called a \emph{small deformation}.  
In this paper we only consider small deformations of compact complex manifolds.  
The Kuranishi family
\[
\pi \colon (\mathcal{X}, X_0) \longrightarrow (\mathcal{B},0)
\]
is the unique (up to isomorphism) semiuniversal small deformation of $X$.

Let $X$ be a compact complex manifold.  
We say that the deformations of complex structures on $X$ are \emph{unobstructed} if the Kuranishi space $\mathcal{B}$ is smooth.  
By the classical Kodaira-Spencer-Kuranishi theory, this is equivalent to the following.  
Consider the Beltrami differential
\[
\phi(t) = \phi_1 + \frac{1}{2}\,\bar{\partial}^* G[\phi(t),\phi(t)],
\qquad
\phi_1 = \sum_{\nu=1}^m \eta_\nu t_\nu,
\]
where $\bar{\partial}^*$, $G$, and $\mathcal{H}$ are the usual operators from Hodge theory (in particular, $\mathcal{H}$ denotes the projection onto harmonic forms), and $\eta_1,\dots,\eta_r$ is a basis of
\[
\mathcal{H}^{0,l}(X,T^{1,0}),
\]
the space of harmonic $(0,1)$-forms with values in $T^{1,0}X$.  
The parameter $t = (t_1,\dots,t_r)$ is taken in a small neighbourhood of the origin in $\mathbb{C}^r$, and we write $\phi(t) = \sum_{k\ge 1} \phi_k(t)$, where $\phi_k$ is homogeneous of degree $k$ in $t$.

Then the deformations of complex structures on $X$ are unobstructed if and only if $\phi(t)$ satisfies the Maurer-Cartan equation
\begin{equation}\label{int}
\bar{\partial}\phi(t) - \frac{1}{2}[\phi(t),\phi(t)] = 0,
\end{equation}
or, equivalently,
\[
\mathcal{H}[\phi(t),\phi(t)] = 0
\]
for all sufficiently small $t$.  
Note that \eqref{int} holds for all small $t$ if and only if the homogeneous terms satisfy
\[
\bar{\partial}\phi_k = \frac{1}{2}\sum_{j=1}^k[\phi_j,\phi_{k-j}]
\qquad\text{for all } k\in\mathbb{N}.
\]

In particular, the deformations of complex structures on $X$ are unobstructed if and only if, for every $N\in\mathbb{N}$, any $N$-th order deformation $\sum_{1\le j\le N}\phi_j$ can be extended to order $N+1$, i.e.\ there exists $\phi_{N+1}$ such that $\sum_{1\le j\le N+1}\phi_j$ is an $(N+1)$-st order deformation.  
It is well known that the obstruction to extending an $N$-th order deformation $\sum_{1\le j\le N}\phi_j$ is given by
\[
\sum_{j=1}^N [\phi_j,\phi_{N+1-j}],
\]
which defines a cohomology class in $H_{\bar{\partial}}^2(X,T^{1,0})$.

\subsection{The cohomology contraction map}

Recall that the cohomology contraction map is defined by
\[
\mu \colon H^{0,2}_{\b{\p}}\bigl(X, T^{1,0}\bigr)
\longrightarrow 
\bigoplus_{p,q} \operatorname{Hom}\bigl(H_{\bar{\partial}}^{p,q}(X), H_{\bar{\partial}}^{p-1,q+2}(X)\bigr),
\quad
[\sigma] \longmapsto [i_\sigma(\bullet)],
\]
where, for each bidegree \((p,q)\), the map
\[
[i_\sigma(\bullet)] \colon H_{\bar{\partial}}^{p,q}(X) \longrightarrow H_{\bar{\partial}}^{p-1,q+2}(X)
\]
is induced by contraction with the vector-valued form \(\sigma\).  
Since \(\bar{\partial}\sigma=0\) implies \([\bar{\partial},i_\sigma]=0\), the operator \(i_\sigma\) preserves \(\bar{\partial}\)-closedness and hence is well-defined on Dolbeault cohomology.  
Moreover, if \(\sigma=\bar{\partial}\tau\), then
\[
i_\sigma = i_{\bar{\partial}\tau} = [\bar{\partial},i_\tau]
= \bar{\partial} i_\tau - i_\tau \bar{\partial},
\]
so \(i_\sigma\) induces the zero map on cohomology. Therefore \(\mu\) is a well-defined map on \(H_{\bar{\partial}}^2(X,T^{1,0})\).

For fixed \((p,q)\), we also consider the component
\[
\mu_{p,q} \colon 
H_{\bar{\partial}}^{0,2}(X,T^{1,0})
\longrightarrow
\operatorname{Hom}\bigl(H_{\bar{\partial}}^{p,q}(X), H_{\bar{\partial}}^{p-1,q+2}(X)\bigr),
\]
defined by
\(
\mu_{p,q}\colon [\sigma] \longmapsto [\,i_\sigma(\bullet)\,],
\)
so that
\[
\mu = \bigoplus_{p,q} \mu_{p,q}.
\]

We now recall the holomorphic Cartan homotopy formulas, which will be used frequently in what follows.  
For any \(\phi,\psi \in A^{0,1}\bigl(X,T^{1,0}\bigr)\), the (total) Lie derivative of \(\phi\) is defined by
\[
  \mathcal{L}_\phi := [i_\phi,d]
  = i_\phi \circ d - d \circ i_\phi
  = \mathcal{L}_\phi^{1,0} + \mathcal{L}_\phi^{0,1},
\]
where
\[
  \mathcal{L}_\phi^{1,0} := [i_\phi,\partial]
  = i_\phi\partial - \partial i_\phi,
  \qquad
  \mathcal{L}_\phi^{0,1} := [i_\phi,\bar{\partial}]
  = i_\phi\bar{\partial} - \bar{\partial} i_\phi
  = -\,i_{\bar{\partial}\phi}.
\]
Then the following identities hold:
\begin{equation}\label{Cartan-formula}
  [\mathcal{L}_\phi^{1,0}, i_\psi]
  = \mathcal{L}_\phi^{1,0} i_\psi - i_\psi \mathcal{L}_\phi^{1,0}
  = i_{[\phi,\psi]},
\end{equation}
and
\[
  [i_\phi,\bar{\partial}]
  = i_\phi\bar{\partial} - \bar{\partial} i_\phi
  = -\,i_{\bar{\partial}\phi},
\]
\[
  [\bar{\partial},\mathcal{L}_\phi^{1,0}]
  = \bar{\partial}\mathcal{L}_\phi^{1,0} + \mathcal{L}_\phi^{1,0}\bar{\partial}
  = \mathcal{L}_{\bar{\partial}\phi}^{1,0},
\]
\[
  [\mathcal{L}_\phi^{1,0},\mathcal{L}_\psi^{1,0}]
  = \mathcal{L}_\phi^{1,0}\mathcal{L}_\psi^{1,0}
   - \mathcal{L}_\psi^{1,0}\mathcal{L}_\phi^{1,0}
  = \mathcal{L}_{[\phi,\psi]}^{1,0}.
\]
These identities are standard (see, for example, \cite{MR3832143,MR2539771,FM06,Cle05}, where they are referred to as holomorphic Cartan homotopy formulas) and will be used without further comment.  
For completeness we refer the reader to \cite[Lemma~3.2]{LRY15}, \cite[Lemma~3.7]{Xia19deri}, and \cite[Lemma~3.3]{Xia19dDol}.  
As observed in \cite{Fri91} (see also \cite{LR11}), the Cartan formula \eqref{Cartan-formula} is closely related to the Tian--Todorov lemma \cite{Tia87,Tod89}.

\subsection{The contraction map annihilates obstructions}

In this section we show that the obstructions to deforming complex structures are contained in the kernel of the contraction map.

\begin{thm}[{\cite[Theorem~3.4]{LRY15}}]
For any $\phi\in A^{0,1}(X,T^{1,0})$ we have
\[
e^{-i_{\phi}}\circ\bar{\partial}\circ e^{i_\phi}
= \bar{\partial}-\mathcal{L}_\phi^{0,1},
\qquad
e^{-i_\phi}\circ\partial\circ e^{i_\phi}
= \partial-\mathcal{L}_\phi^{1,0}-i_{\frac{1}{2}[\phi,\phi]},
\]
where $\mathcal{L}_\phi^{1,0}=[i_\phi,\partial]$ and $\mathcal{L}_\phi^{0,1}=[i_\phi,\bar{\partial}]$.
\end{thm}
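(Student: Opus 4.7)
The plan is to prove both identities by a Baker--Campbell--Hausdorff-type expansion combined with the Cartan homotopy formulas around \eqref{Cartan-formula}, and then to observe that the resulting iterated-commutator series collapses after finitely many steps. The starting point is the general formal identity: for any operators $A, D$, setting $f(t) = e^{-tA}\circ D\circ e^{tA}$ gives $f'(t) = [f(t), A]$ with $f(0) = D$, hence
\[
e^{-A}\circ D\circ e^A \;=\; D + [D, A] + \tfrac{1}{2!}\bigl[[D, A], A\bigr] + \tfrac{1}{3!}\bigl[[[D, A], A], A\bigr] + \cdots,
\]
which I would apply with $A = i_\phi$ and $D$ either $\bar\partial$ or $\partial$.

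I would then compute the first few iterated brackets using the Cartan-type identities listed after \eqref{Cartan-formula}. For $D = \bar\partial$ one has $[\bar\partial, i_\phi] = -\mathcal{L}_\phi^{0,1} = i_{\bar\partial\phi}$; for $D = \partial$ one has $[\partial, i_\phi] = -\mathcal{L}_\phi^{1,0}$ and then $[[\partial, i_\phi], i_\phi] = -[\mathcal{L}_\phi^{1,0}, i_\phi] = -i_{[\phi,\phi]}$, the last equality being \eqref{Cartan-formula} with $\psi = \phi$. So the visible leading terms in the expansion already match the claimed right-hand sides.

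The only non-routine step is to make the series terminate, via a commuting-contractions lemma: for any $\psi \in A^{0,q}(X, T^{1,0})$, the ordinary commutator $[i_\phi, i_\psi]$ vanishes on $A^{*,*}(X)$. Writing locally $i_\phi = \phi^i \wedge \iota_{\partial_{z^i}}$ and $i_\psi = \psi^j \wedge \iota_{\partial_{z^j}}$, one has $\iota_{\partial_{z^i}}\psi^j = 0 = \iota_{\partial_{z^j}}\phi^i$ for type reasons, and a careful bookkeeping of three Koszul signs (one from each interior product passing over the opposite wedge factor, and one from anticommuting the two interior products) shows they cancel, precisely because $\phi$ is of degree one. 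Applying the lemma to $\psi = \bar\partial\phi$ and $\psi = [\phi,\phi]$ (both of type $(0, 2)$) forces termination: the $\bar\partial$-series stops at the linear term and the $\partial$-series stops at the quadratic term, giving
\[
e^{-i_\phi}\circ \bar\partial \circ e^{i_\phi} = \bar\partial - \mathcal{L}_\phi^{0,1},
\qquad
e^{-i_\phi}\circ \partial \circ e^{i_\phi} = \partial - \mathcal{L}_\phi^{1,0} - i_{\frac{1}{2}[\phi,\phi]},
\]
as required. This commuting-contractions lemma is the sole point requiring any genuine verification; everything else reduces to formal expansion and direct substitution into the Cartan identities already recorded in the excerpt.
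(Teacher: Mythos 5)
Your proposal is correct. Note first that the paper itself gives no proof of this statement: it is imported verbatim from \cite[Theorem~3.4]{LRY15}, so there is no in-paper argument to compare against; your derivation supplies a self-contained proof, and it is essentially the standard one. The two ingredients both check out. The adjoint-orbit expansion $e^{-A}\circ D\circ e^{A}=D+[D,A]+\tfrac{1}{2!}[[D,A],A]+\cdots$ is legitimate here because $i_\phi$ is a nilpotent operator of total degree zero (it maps $A^{p,q}$ to $A^{p-1,q+1}$ and vanishes after at most $n+1$ iterations), so all sums are finite and the formal identity holds without convergence issues. The commuting-contractions lemma is also correct: for $\phi\in A^{0,1}(X,T^{1,0})$ and $\psi\in A^{0,l}(X,T^{1,0})$ one finds $i_\phi i_\psi=(-1)^{l}\phi^i\wedge\psi^j\wedge\iota_{\partial_i}\iota_{\partial_j}$ while $i_\psi i_\phi=(-1)^{1+1+l}\phi^i\wedge\psi^j\wedge\iota_{\partial_i}\iota_{\partial_j}$, so the two agree precisely because $\deg\phi=1$; your tally of the Koszul signs omits the $(-1)^{kl}$ from commuting the wedge factors $\phi^i\wedge\psi^j$, but including it does not change the conclusion. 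Applied to $\psi=\bar\partial\phi$ and $\psi=[\phi,\phi]$, together with $[\bar\partial,i_\phi]=i_{\bar\partial\phi}$ and the Cartan identity \eqref{Cartan-formula} with $\psi=\phi$, this truncates the two series exactly as you claim, and the factor $\tfrac{1}{2!}$ lands on $i_{[\phi,\phi]}$ to give $i_{\frac{1}{2}[\phi,\phi]}$ by linearity of contraction. So the argument is complete and correct.
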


Set
\[
\bar{\partial}_\phi := \bar{\partial}+[\partial,i_\phi].
\]
Then
\begin{align*}
i_{\bar{\partial}\phi-\frac{1}{2}[\phi,\phi]}
&= e^{-i_\phi}\circ d\circ e^{i_\phi} - \partial - \bar{\partial}_\phi.
\end{align*}

Now write $\phi=\sum_{i=1}^N \phi_i$.  
Assume that
\[
\bigoplus_{r\ge 1} d_r^{p,q} = 0
\quad\text{and}\quad
\bigoplus_{\substack{r-1 \ge i\ge 0}} d_r^{p-1-i,\,q+1+i}(X)=0.
\]
By Theorem~\ref{thm11}, for any $\alpha_0^{p,q}\in A^{p,q}(X)\cap\ker\bar{\partial}$ the equation
\[
d\bigl(e^{i_\phi}(\alpha(t))\bigr) = 0 \ \text{mod } t^{N+1},
\qquad
\alpha(0)^{p,q} = \alpha_0^{p,q},
\]
admits a smooth solution $\alpha(t)=\sum_{k=0}^N\alpha_k\in F^pA^{p+q}(X)$.

Taking the homogeneous part of degree $N+1$ and looking at the $(p-1,q+2)$-component, we obtain
\begin{align}\label{eqn3.2}
\begin{split}
-\,i_{\frac{1}{2}[\phi,\phi]_{N+1}}\alpha_0^{p,q}
&= \bigl(i_{\bar{\partial}\phi-\tfrac{1}{2}[\phi,\phi]}\alpha(t)\bigr)^{p-1,q+2}_{N+1}\\
&= \bigl((e^{-i_\phi}-1)\circ d\circ e^{i_\phi}\,\alpha(t)\bigr)^{p-1,q+2}_{N+1}
   + \bigl(d\circ (e^{i_\phi}-1)\alpha(t)\bigr)^{p-1,q+2}_{N+1}\\
&\quad - \bigl([\partial,i_\phi]\alpha(t)\bigr)^{p-1,q+2}_{N+1}\\
&= \bigl(d\circ (e^{i_\phi}-1)\alpha(t)\bigr)^{p-1,q+2}_{N+1},
\end{split}
\end{align}
where in the last equality we used that 
\(
\bigl([\partial,i_\phi]\alpha(t)\bigr)^{p-1,q+2}_{N+1}=0.
\)

For the $(N+1)$-th step, Lemma~\ref{lemma2.2}(ii) gives
\[
d\bigl((e^{i_\phi}-1)\alpha\bigr)_{N+1}
= \Pi^{\ge p-1,*}\,d\bigl((e^{i_\phi}-1)\alpha\bigr)_{N+1}
\in dA^{p+q}(X)\cap F^{p-1}A^{p+q+1}(X).
\]
If, in addition,
\[
\bigoplus_{\substack{r-1 \ge i\ge 0}} d_r^{p-2-i,\,q+2+i}(X)=0,
\]
then by Lemma~\ref{cordeg2} there exists $\beta\in F^{p-1}A^{p+q}(X)$ such that
\[
d\bigl((e^{i_\phi}-1)\alpha\bigr)_{N+1} = d\beta.
\]
Substituting this into \eqref{eqn3.2} yields
\begin{align*}
-\,i_{\frac{1}{2}[\phi,\phi]_{N+1}}\alpha_0^{p,q}
&= \bigl(d\circ (e^{i_\phi}-1)\alpha(t)\bigr)^{p-1,q+2}_{N+1}\\
&= (d\beta)^{p-1,q+2}\\
&= \bar{\partial}\beta^{p-1,q+1}\in \mathrm{im}\,\bar{\partial}.
\end{align*}
By the definition of $\mu_{p,q}$, we conclude that
\[
\sum_{j=1}^N [\phi_j,\phi_{N+1-j}] \in \ker \mu_{p,q}.
\]

Moreover, the assumptions
\[
\bigoplus_{\substack{r-1 \ge i\ge 0}} d_r^{p-1-i,\,q+1+i}(X)=0
\quad\text{and}\quad
\bigoplus_{\substack{r-1 \ge i\ge 0}} d_r^{p-2-i,\,q+2+i}(X)=0
\]
are equivalent to
\[
\bigoplus_{r\ge 1} d_r^{p-1,\,q+1}(X)=0
\quad\text{and}\quad
\bigoplus_{\substack{r-1 \ge i\ge 0}} d_r^{p-2-i,\,q+2+i}(X)=0.
\]
Combining these observations, we obtain the following.

\begin{thm}\label{deformation obstruction}
Let $X$ be a compact complex manifold such that
\[
\bigoplus_{r\ge 1} d_r^{p,q} = 0,
\quad
\bigoplus_{r\ge 1} d_r^{p-1,q+1} = 0,
\quad
\bigoplus_{\substack{r-1 \ge i\ge 0}} d_r^{p-2-i,\,q+2+i}(X)=0.
\]
Then the obstructions
\(
\sum_{j=1}^N [\phi_j,\phi_{N+1-j}]
\)
lie in the kernel of the contraction map, i.e.
\[
\sum_{j=1}^N [\phi_j,\phi_{N+1-j}] \in \ker \mu_{p,q}.
\]
\end{thm}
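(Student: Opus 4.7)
My plan is to test the obstruction class $\sum_{j=1}^N[\phi_j,\phi_{N+1-j}]$ against an arbitrary Dolbeault class in $H^{p,q}_{\bar\partial}(X)$ by extending a chosen representative along $\phi$ and reading off the obstruction from the failure of the Maurer--Cartan equation at order $N+1$. The three vanishing hypotheses split into two roles: the first two force the \emph{existence} of a filtered extension via Theorem~\ref{thm11}, while the third controls its $\bar\partial$-cohomology one filtration step lower via Lemma~\ref{cordeg2}.

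The first step is to rewrite the hypotheses in the form used by Theorem~\ref{thm11}. Under the third assumption $\bigoplus_{r-1\ge i\ge 0}d_r^{p-2-i,q+2+i}=0$, the condition $\bigoplus_{r\ge 1}d_r^{p-1,q+1}=0$ becomes equivalent to $\bigoplus_{r-1\ge i\ge 0}d_r^{p-1-i,q+1+i}=0$, since every higher-shift differential in the latter sum is already killed by the third assumption. For any $\alpha_0^{p,q}\in A^{p,q}(X)\cap\ker\bar\partial$, Theorem~\ref{thm11} then produces a polynomial family $\alpha(t)=\sum_{k=0}^N\alpha_k\in F^pA^{p+q}(X)$ with $\alpha(0)^{p,q}=\alpha_0^{p,q}$ and $d\bigl(e^{i_\phi}\alpha(t)\bigr)\equiv 0\pmod{t^{N+1}}$. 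I would then conjugate by $e^{-i_\phi}$, using the identity
\[
e^{-i_\phi}\circ d\circ e^{i_\phi}=\partial+\bar\partial_\phi+i_{\bar\partial\phi-\frac12[\phi,\phi]},
\]
and exploit that the Maurer--Cartan defect $\bar\partial\phi-\frac12[\phi,\phi]$ vanishes modulo $t^{N+1}$, with homogeneous component of degree $N+1$ equal to $-\frac12\sum_{j=1}^N[\phi_j,\phi_{N+1-j}]$. Isolating the $(p-1,q+2)$-part at order $N+1$ and noting that $([\partial,i_\phi]\alpha(t))^{p-1,q+2}_{N+1}=0$ for bidegree reasons, the computation collapses to
\[
-\,i_{\frac12[\phi,\phi]_{N+1}}\alpha_0^{p,q}=\bigl(d\circ(e^{i_\phi}-1)\alpha(t)\bigr)^{p-1,q+2}_{N+1}.
\]

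The main obstacle is to exhibit the right-hand side as $\bar\partial$-exact, which is exactly what the third vanishing condition is designed to deliver. Lemma~\ref{lemma2.2}(ii), applied at order $N+1$, places $\bigl(d\circ(e^{i_\phi}-1)\alpha(t)\bigr)_{N+1}$ in $F^{p-1}A^{p+q+1}(X)\cap dA^{p+q}(X)$. Specializing Lemma~\ref{cordeg2} with the shift $p\mapsto p-2$, so that $F^{p+1}$ becomes $F^{p-1}$, translates the third assumption into $F^{p-1}A^{p+q+1}(X)\cap dA^{p+q}(X)=dF^{p-1}A^{p+q}(X)$, whence $\bigl(d\circ(e^{i_\phi}-1)\alpha(t)\bigr)_{N+1}=d\beta$ for some $\beta\in F^{p-1}A^{p+q}(X)$. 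Taking the $(p-1,q+2)$-component gives $(d\beta)^{p-1,q+2}=\bar\partial\beta^{p-1,q+1}\in\operatorname{im}\bar\partial$, so $i_{\sum_{j=1}^N[\phi_j,\phi_{N+1-j}]}\alpha_0^{p,q}$ is $\bar\partial$-exact. Since $\alpha_0^{p,q}$ was an arbitrary $\bar\partial$-closed $(p,q)$-form, this forces $\sum_{j=1}^N[\phi_j,\phi_{N+1-j}]\in\ker\mu_{p,q}$, completing the argument.
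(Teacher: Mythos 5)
Your proposal is correct and follows essentially the same route as the paper: the same reduction of the hypotheses via the observation that the third vanishing condition absorbs the shifted terms of the second, the same appeal to Theorem~\ref{thm11} to build the filtered extension, the same conjugation identity isolating $-i_{\frac12[\phi,\phi]_{N+1}}\alpha_0^{p,q}$ as the $(p-1,q+2)$-component of $\bigl(d\circ(e^{i_\phi}-1)\alpha\bigr)_{N+1}$, and the same use of Lemma~\ref{lemma2.2}(ii) together with Lemma~\ref{cordeg2} to exhibit that term as $\bar\partial$-exact. No gaps to report.
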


Let $X$ be a compact complex manifold with trivial canonical bundle, $\dim X=n$, then there exists a nowhere vanishing holomorphic $(n,0)$-form $\Omega\in H^{n,0}_{\bar{\partial}}(X)$.  
Contraction with $\Omega$ induces an isomorphism
\[
\Omega\colon H^{0,2}_{\bar{\partial}}(X,T^{1,0})
\longrightarrow H^{n-1,2}_{\bar{\partial}}(X),
\qquad
[\sigma] \longmapsto [i_\sigma(\Omega)].
\]
For any $[\sigma]\in\ker\mu_{n,0}$ we have
\[
\mu_{n,0}([\sigma])(\Omega)
= [i_\sigma(\Omega)] = 0,
\]
and thus $[\sigma]=0$ by the above isomorphism. Hence
\[
\ker\mu_{n,0} = \{0\},
\]
and, in particular, by \eqref{d-operator} we have $d_r^{n,0}=0$ for all $r\ge 1$.

\begin{thm}\label{thm-CY}
Let $X$ be a compact complex manifold of dimension $n$ with trivial canonical bundle. Suppose that
\[
\bigoplus_{r\ge 1} d_r^{n-1,1} = 0
\quad\text{and}\quad
\bigoplus_{\substack{r-1 \ge i\ge 0}} d_r^{n-2-i,\,2+i}(X)=0.
\]
Then the deformations of the complex structure on $X$ are unobstructed.
\end{thm}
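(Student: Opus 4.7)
The plan is to apply Theorem \ref{deformation obstruction} at the bidegree $(p,q)=(n,0)$ and then exploit the Calabi--Yau structure to see that the kernel into which the obstructions are forced is actually trivial. This reduces the problem to a verification of hypotheses.

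First, I would check that all three vanishing hypotheses of Theorem \ref{deformation obstruction} hold at $(p,q)=(n,0)$. Two of them, namely $\bigoplus_{r\ge 1}d_r^{n-1,1}=0$ and $\bigoplus_{r-1\ge i\ge 0}d_r^{n-2-i,2+i}=0$, are assumed directly in the statement of Theorem \ref{thm-CY}. The only extra requirement is
\[
\bigoplus_{r\ge 1}d_r^{n,0}=0,
\]
and this is automatic by degree reasons. Indeed, for $r\ge 2$ the target $E_r^{n+r,-r+1}$ vanishes because no $(n+r,\bullet)$-forms exist on a manifold of complex dimension $n$; for $r=1$ the differential $d_1^{n,0}\colon H^{n,0}_{\bar\partial}(X)\to H^{n+1,0}_{\bar\partial}(X)=0$ is zero for the same reason.

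Next, applying Theorem \ref{deformation obstruction} at $(n,0)$ yields, for every $N$-th order deformation $\sum_{j=1}^N\phi_j$, the obstruction-type inclusion
\[
\sum_{j=1}^N[\phi_j,\phi_{N+1-j}]\in\ker\mu_{n,0}.
\]
At this stage I would invoke the observation recorded immediately before the statement: since $K_X$ is trivial, there is a nowhere vanishing holomorphic $(n,0)$-form $\Omega$, and contraction with $\Omega$ induces an isomorphism
\[
H^{0,2}_{\bar\partial}(X,T^{1,0})\xrightarrow{\;\cong\;}H^{n-1,2}_{\bar\partial}(X),\qquad [\sigma]\mapsto [i_\sigma\Omega].
\]
If $[\sigma]\in\ker\mu_{n,0}$, then in particular $\mu_{n,0}([\sigma])([\Omega])=[i_\sigma\Omega]=0$, which forces $[\sigma]=0$. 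Hence $\ker\mu_{n,0}=\{0\}$.

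Combining these facts, every obstruction class vanishes in $H^{0,2}_{\bar\partial}(X,T^{1,0})$, so by Kodaira--Spencer--Kuranishi theory every $N$-th order deformation extends to order $N+1$; iterating for all $N$ shows that the Maurer--Cartan equation is formally solvable to all orders, and convergence then gives a smooth Kuranishi space. There is no substantive obstacle beyond what is already packaged in Theorem \ref{deformation obstruction}; the entire proof is essentially a matching of indices together with the Calabi--Yau identification. The only mild point worth being explicit about is that the vanishing $d_r^{n,0}=0$ is used, but as noted above it is automatic from dimensional reasons and needs no extra hypothesis.
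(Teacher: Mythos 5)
Your proposal is correct and matches the paper's own argument: the paper likewise notes that $\ker\mu_{n,0}=\{0\}$ via contraction with the nowhere vanishing $(n,0)$-form $\Omega$, that $d_r^{n,0}=0$ automatically for degree reasons, and then applies Theorem~\ref{deformation obstruction} at bidegree $(n,0)$ to conclude that every obstruction class vanishes. No substantive differences.
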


If, in addition, the Fr\"olicher spectral sequence of $X$ degenerates at $E_2$ in total degree $n$, i.e.\ $d_r^{p,q}=0$ for all $r\ge 2$ and all $p,q$ with $p+q=n$, we obtain the following corollary.

\begin{cor}\label{cor-CY}
Let $X$ be a compact complex manifold of dimension $n$ with trivial canonical bundle whose Fr\"olicher spectral sequence degenerates at $E_2$ in total degree $n$.  
If, moreover,
\[
d_1^{n-1,1}=0
\quad\text{and}\quad
d_1^{n-2,2}=0,
\]
then the deformations of the complex structure on $X$ are unobstructed.
\end{cor}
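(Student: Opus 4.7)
The plan is to derive this corollary directly from Theorem~\ref{thm-CY} by checking that the two Fr\"olicher-type vanishing hypotheses appearing there follow from the combination of the $E_2$-degeneration assumption in total degree $n$ together with the two explicit $r=1$ vanishing conditions. The key observation is that every differential entering the hypotheses of Theorem~\ref{thm-CY} has its source in total degree exactly $n$: both $E_r^{n-1,1}$ and $E_r^{n-2-i,\,2+i}$ lie in total degree $n$. Consequently, for $r\ge 2$ all such differentials vanish automatically from the assumed $E_2$-degeneration in total degree $n$, and only the $r=1$ pieces will require separate treatment.

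More precisely, I would first verify the hypothesis $\bigoplus_{r\ge 1}d_r^{n-1,1}=0$ by splitting it into the $r=1$ case, which is supplied directly by the assumption $d_1^{n-1,1}=0$, and the $r\ge 2$ cases, which follow from the $E_2$-degeneration assumption since the source $E_r^{n-1,1}$ sits in total degree $n$. In the same spirit, I would verify the hypothesis $\bigoplus_{r-1\ge i\ge 0}d_r^{n-2-i,\,2+i}(X)=0$ for every $r\ge 1$ by noting that the source of $d_r^{n-2-i,\,2+i}$ lies in total degree $(n-2-i)+(2+i)=n$, so that again all summands with $r\ge 2$ vanish by the degeneration assumption, and the only $r=1$ summand is $d_1^{n-2,2}$, which is killed by hypothesis.

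With both hypotheses of Theorem~\ref{thm-CY} thus verified, the unobstructedness of the deformations of the complex structure on $X$ follows immediately. There is no genuine obstacle in the argument, as this corollary is simply a transparent repackaging of Theorem~\ref{thm-CY} in a setting where the behavior of the Fr\"olicher spectral sequence in total degree $n$ is already controlled at the second page; the only non-routine point is the bookkeeping that places every relevant differential into total degree $n$.
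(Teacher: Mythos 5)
Your proposal is correct and is essentially the paper's own (implicit) argument: the authors derive the corollary from Theorem~\ref{thm-CY} by exactly the observation you make, namely that every differential in its hypotheses has source in total degree $n$, so the assumed $E_2$-degeneration in total degree $n$ kills all $r\ge 2$ summands and only $d_1^{n-1,1}$ and $d_1^{n-2,2}$ remain to be assumed. No gaps.
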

For solvable complex parallelisable
manifolds, the Fr\"olicher spectral sequence degenerates at $E_2$ \cite{Kas15,KS}. We have 
\begin{cor}\label{cor-CY-2}
Let $X$ be a $n$-dimensional solvable complex parallelisable manifold with
\[
d_1^{n-1,1}=0
\quad\text{and}\quad
d_1^{n-2,2}=0,
\]
then the deformations of the complex structure on $X$ are unobstructed.
\end{cor}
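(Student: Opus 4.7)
The plan is to deduce Corollary \ref{cor-CY-2} as a direct specialization of Corollary \ref{cor-CY} to the class of solvable complex parallelisable manifolds. The two external inputs I will need are: (i) the fact that a complex parallelisable manifold automatically has trivial canonical bundle, and (ii) Kasuya's degeneration theorem \cite{Kas15,KS}, which ensures that the Fr\"olicher spectral sequence of a solvable complex parallelisable manifold degenerates at $E_2$.

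First I would observe that, by definition, a complex parallelisable manifold has holomorphically trivial tangent bundle $T^{1,0}X\cong\mathcal{O}_X^{\oplus n}$, so that $K_X=\det(T^{1,0}X)^{\vee}$ is holomorphically trivial. Hence the Calabi--Yau hypothesis of Corollary \ref{cor-CY} is automatically verified and a nowhere vanishing holomorphic $(n,0)$-form exists on $X$.

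Next I would translate Kasuya's theorem into the precise vanishing statement required by Corollary \ref{cor-CY}. The degeneration at $E_2$ forces $d_r^{p,q}=0$ for all $r\geq 2$ and all $(p,q)$; in particular the spectral sequence degenerates at $E_2$ in total degree $n$, which is the standing hypothesis of Corollary \ref{cor-CY}. The two remaining requirements of that corollary, namely $d_1^{n-1,1}=0$ and $d_1^{n-2,2}=0$, are exactly the hypotheses we have assumed. Applying Corollary \ref{cor-CY} then immediately yields the unobstructedness of the Kuranishi family of $X$.

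There is no substantive analytic obstacle in this argument, since all the real work has been absorbed into Corollary \ref{cor-CY} and the quoted degeneration theorem. The only point requiring a moment of care is to verify that the two conditions $\bigoplus_{r\ge 1} d_r^{n-1,1}=0$ and $\bigoplus_{r-1\ge i\ge 0}d_r^{n-2-i,\,2+i}=0$ of Corollary \ref{cor-CY} really do follow from the combination of $E_2$-degeneration in total degree $n$ and the two assumed first-page vanishings; this is a bookkeeping check based on the observation that every differential appearing in these direct sums has source in total degree $n$. The genuine content of Corollary \ref{cor-CY-2} then lies not in its proof but in its applicability, as illustrated by the complex parallelisable Nakamura manifold treated in Example \ref{ex-Naka}.
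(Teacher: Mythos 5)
Your proposal is correct and follows exactly the paper's route: the paper deduces Corollary \ref{cor-CY-2} by combining Kasuya's $E_2$-degeneration theorem for solvable complex parallelisable manifolds with Corollary \ref{cor-CY} (equivalently Theorem \ref{thm-CY}), the triviality of $K_X$ being immediate from the triviality of $T^{1,0}X$. Your bookkeeping check that all differentials in the two direct sums of Theorem \ref{thm-CY} either have source in total degree $n$ with $r\ge 2$ (hence vanish by $E_2$-degeneration) or are the assumed first-page maps $d_1^{n-1,1}$, $d_1^{n-2,2}$ is exactly the point, and it is carried out correctly.
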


It is worth noting that Popovich \cite{MR3978322} also proved the following unobstructedness theorem.

\begin{thm}[{\cite[Observation~3.5]{MR3978322}}]\label{thm-po}
Let $X$ be a compact complex manifold of complex dimension $n$ with trivial canonical bundle $K_X$ such that the linear maps
\[
A_1\colon H_{\bar{\partial}}^{n-1,1}(X,\mathbb{C}) \longrightarrow H_{BC}^{n,1}(X,\mathbb{C}), 
\qquad
[\alpha]_{\bar{\partial}} \longmapsto [\partial\alpha]_{BC},
\]
and
\[
A_2\colon H_A^{n-2,2}(X,\mathbb{C}) \longrightarrow H_{BC}^{n-1,2}(X,\mathbb{C}),
\qquad
[v]_A \longmapsto [\partial v]_{BC},
\]
are identically zero. Then the Kuranishi family of $X$ is unobstructed.
\end{thm}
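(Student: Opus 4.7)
The plan is to deduce Theorem~\ref{thm-po} from the refined criterion Theorem~\ref{thm-CY} by translating the algebraic hypotheses $A_1=0$ and $A_2=0$ into the Fr\"olicher-type vanishing conditions required there. The first translation is essentially immediate: for $r\ge 2$ the target $E_r^{n-1+r,\,2-r}(X)$ vanishes on dimensional grounds since $n-1+r>n$, so it suffices to check $d_1^{n-1,1}=0$. Given $\alpha\in A^{n-1,1}(X)$ with $\bar\partial\alpha=0$, the hypothesis $A_1=0$ furnishes $\gamma\in A^{n-1,0}(X)$ with $\partial\alpha=\partial\bar\partial\gamma=-\bar\partial(\partial\gamma)$, so $[\partial\alpha]_{\bar\partial}=0$ and hence $\bigoplus_{r\ge 1}d_r^{n-1,1}=0$.

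The second translation---that $A_2=0$ implies $\bigoplus_{r-1\ge i\ge 0}d_r^{n-2-i,\,2+i}=0$---is the main point. By Lemma~\ref{cordeg2} it reduces to proving the identity
\[
F^{n-1}A^{n+1}(X)\cap dA^n(X) \;=\; dF^{n-1}A^n(X).
\]
For the nontrivial inclusion, let $\beta=\sum_{p=0}^n\beta^{p,n-p}\in A^n(X)$ satisfy $d\beta\in F^{n-1}A^{n+1}(X)$. The vanishing of the $(p,n+1-p)$-components of $d\beta$ for $0\le p\le n-2$ yields the Fr\"olicher chain
\[
\bar\partial\beta^{0,n}=0,\qquad \partial\beta^{p-1,n-p+1}+\bar\partial\beta^{p,n-p}=0 \ \ (1\le p\le n-2).
\]
In particular $\bar\partial\beta^{n-2,2}=-\partial\beta^{n-3,3}$, so $\partial\bar\partial\beta^{n-2,2}=0$ and $\beta^{n-2,2}$ defines a class in $H^{n-2,2}_A(X)$. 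The hypothesis $A_2=0$ then produces $\gamma\in A^{n-2,1}(X)$ with $\partial\beta^{n-2,2}=\partial\bar\partial\gamma$.

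Setting $\eta:=\sum_{p=0}^{n-2}\beta^{p,n-p}+\partial\gamma$, one verifies bidegree by bidegree that $d\eta=0$: the $(p,n+1-p)$-components for $p\le n-2$ vanish by the chain, the $(n-1,2)$-component equals $\partial\beta^{n-2,2}+\bar\partial(\partial\gamma)=\partial\beta^{n-2,2}-\partial\bar\partial\gamma=0$, and the $(n,1)$-component is $\partial(\partial\gamma)=0$. Since $\beta-\eta=(\beta^{n-1,1}-\partial\gamma)+\beta^{n,0}\in F^{n-1}A^n(X)$ and $d\beta=d(\beta-\eta)$, the required identity follows and Theorem~\ref{thm-CY} yields unobstructedness. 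The delicate point is the second implication: one must recognize that the tail $\beta^{n-2,2}$ of the Fr\"olicher chain automatically lies in $\ker(\partial\bar\partial)$, so that the Aeppli hypothesis $A_2=0$ is applicable, and then verify that the correction $\partial\gamma$, of bidegree $(n-1,1)$, does not disrupt the lower-bidegree components of the chain.
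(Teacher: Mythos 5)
Your proposal is correct and follows essentially the same route as the paper: Theorem~\ref{thm-po} is deduced from Theorem~\ref{thm-CY} by showing that $A_1=0$ and $A_2=0$ imply the two Fr\"olicher-type vanishing conditions, with the key step being the identity $F^{n-1}A^{n+1}(X)\cap dA^n(X)=dF^{n-1}A^n(X)$ via Lemma~\ref{cordeg2} (this is exactly the content of the paper's Lemma~\ref{lemma4.6}). The only (harmless) variations are that for $A_1$ you dispose of $r\ge 2$ by a dimension count instead of invoking Lemma~\ref{cordeg1}, and for $A_2$ you apply the map directly to the Aeppli class of $\beta^{n-2,2}$ rather than using Popovici's reformulation of the condition on $d$-closed $\partial$-exact $(n-1,2)$-forms.
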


A simple argument shows that the conditions $A_1=0$ and $A_2=0$ are stronger than the vanishing assumptions on the differentials $d_r^{p,q}$ in Theorem~\ref{thm-CY}.  
In particular, Theorem~\ref{thm-CY} strictly generalizes Theorem~\ref{thm-po}.  
We now spell out the precise implications.

\begin{lemma}\label{lemma4.6}
We have:
\begin{itemize}
  \item[\emph{(i)}] If $A_1=0$, then $\displaystyle\bigoplus_{r\ge 1} d_r^{n-1,1} = 0$.
  \item[\emph{(ii)}] If $A_2=0$, then 
  \[
    \bigoplus_{\substack{r-1 \ge i \ge 0}} d_r^{n-2-i,\,2+i}(X)=0.
  \]
\end{itemize}
\end{lemma}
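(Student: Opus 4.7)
The plan is to translate both parts into the equivalent Frölicher-degeneration characterisations furnished by Lemmas~\ref{cordeg1} and~\ref{cordeg2}, and then to use $A_1=0$ (respectively $A_2=0$) to supply exactly the Bott--Chern primitive required. In both cases the mechanism is the identity $\partial\bar\partial=-\bar\partial\partial$, which converts the $\partial\bar\partial$-exactness produced by $A_i=0$ into an honest $\bar\partial$-primitive at the next filtration step.

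For part (i), Lemma~\ref{cordeg1} lets me replace the target $\bigoplus_{r\ge 1}d_r^{n-1,1}=0$ with the lifting statement: every $\alpha\in A^{n-1,1}(X)\cap\ker\bar\partial$ is the $(n-1,1)$-component of some $x\in F^{n-1}A^n(X)\cap\ker d$. Since $A^{n+1,0}(X)=0$, one has $F^{n-1}A^n(X)=A^{n,0}(X)\oplus A^{n-1,1}(X)$, so I write $x=\alpha+\beta$ with $\beta\in A^{n,0}(X)$, and $dx=0$ collapses to the single relation $\partial\alpha+\bar\partial\beta=0$ in bidegree $(n,1)$. The hypothesis $A_1[\alpha]_{\bar\partial}=[\partial\alpha]_{BC}=0$ yields $\eta\in A^{n-1,0}(X)$ with $\partial\alpha=\partial\bar\partial\eta$, and I set $\beta:=\partial\eta$, so that $\bar\partial\beta=\bar\partial\partial\eta=-\partial\bar\partial\eta=-\partial\alpha$, as required.

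For part (ii), I apply Lemma~\ref{cordeg2} with $p=n-2$, $q=2$ to reformulate the conclusion as
\[
F^{n-1}A^{n+1}(X)\cap dA^n(X)=dF^{n-1}A^n(X).
\]
The inclusion $\supseteq$ is automatic since $d$ preserves the Hodge filtration, so all the work is in $\subseteq$. I take $\omega=d\eta\in F^{n-1}A^{n+1}(X)$, decompose $\eta=\sum_{k=0}^n\eta^{n-k,k}$, and use the fact that the $(n-2,3)$-part of $\omega$ vanishes to deduce $\bar\partial\eta^{n-2,2}+\partial\eta^{n-3,3}=0$; applying $\partial$ gives $\partial\bar\partial\eta^{n-2,2}=-\partial^2\eta^{n-3,3}=0$, so $\eta^{n-2,2}$ represents a class in $H^{n-2,2}_A(X)$. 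Since $A_2=0$, there exists $\xi\in A^{n-2,1}(X)$ with $\partial\eta^{n-2,2}=\partial\bar\partial\xi$, and I put
\[
\tilde\eta:=\eta^{n,0}+\eta^{n-1,1}-\partial\xi\in F^{n-1}A^n(X).
\]
A direct bidegree check in the only two slots present in $F^{n-1}A^{n+1}(X)$ shows $d\tilde\eta=\omega$: in $(n,1)$ one has $\bar\partial\eta^{n,0}+\partial\eta^{n-1,1}=\omega^{n,1}$ (the $\partial\xi$ term dies by $\partial^2=0$), and in $(n-1,2)$ one gets $\bar\partial\eta^{n-1,1}-\bar\partial\partial\xi=\bar\partial\eta^{n-1,1}+\partial\bar\partial\xi=\bar\partial\eta^{n-1,1}+\partial\eta^{n-2,2}=\omega^{n-1,2}$.

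I do not expect an essential obstacle. Both parts are largely bookkeeping once the right characterisations are in hand: (i) lives entirely in two bidegrees and is immediate after the collapse $F^{n-1}A^n=A^{n,0}\oplus A^{n-1,1}$, while (ii) hinges on the single observation that the filtration level of $\omega$ forces $\eta^{n-2,2}$ into $\ker\partial\bar\partial$, which is precisely what is needed so that $A_2$ may be applied. The main place to be careful is the systematic use of $\partial\bar\partial=-\bar\partial\partial$ when turning the Bott--Chern primitive provided by $A_i=0$ into a solution of the $\bar\partial$-equation at the next filtration level.
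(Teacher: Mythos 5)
Your proposal is correct and follows essentially the same route as the paper: both parts reduce to the filtration identities via Lemmas~\ref{cordeg1} and~\ref{cordeg2}, and the correction terms $\partial\eta$ (resp.\ $-\partial\xi$) are exactly the paper's $\partial\beta^{n-1,0}$ and $-\partial\gamma^{n-2,1}$. The only cosmetic difference is in (ii), where you apply $A_2$ directly to the Aeppli class of $\eta^{n-2,2}$, while the paper first invokes Popovici's reformulation of $A_2=0$ (every $d$-closed, $\partial$-exact $(n-1,2)$-form is $\partial\bar\partial$-exact) and applies it to $\partial\eta^{n-2,2}$; the content is identical.
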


\begin{proof}
\emph{(i)} By definition of $A_1$, the condition $A_1=0$ is equivalent to the following: for every
\[
\alpha^{n-1,1}\in A^{n-1,1}(X)\cap\ker\bar{\partial},
\]
there exists $\beta^{n-1,0}\in A^{n-1,0}(X)$ such that
\[
\partial\alpha^{n-1,1} = \partial\bar{\partial}\beta^{n-1,0}.
\]
Set
\[
x := \alpha^{n-1,1} + \partial\beta^{n-1,0} \in F^{n-1}A^n(X).
\]
Then $\Pi^{n-1,1}x = \alpha^{n-1,1}$ and
\[
dx = \partial\alpha^{n-1,1} + \bar{\partial}\partial\beta^{n-1,0} = 0,
\]
so $x\in \ker d \cap F^{n-1}A^n(X)$.  
By Lemma~\ref{cordeg1}, we have
\[
\bigoplus_{r\ge 1} d_r^{n-1,1}=0.
\]

\medskip

\emph{(ii)} For $A_2=0$, it is shown in \cite[Page~688]{MR3978322} that this is equivalent to the following statement:  
for every
\[
\beta^{n-1,2}\in A^{n-1,2}(X)\cap\ker d \cap \mathrm{im}\,\partial,
\]
there exists $\gamma^{n-2,1}\in A^{n-2,1}(X)$ such that
\[
\beta^{n-1,2} = \partial\bar{\partial}\gamma^{n-2,1}.
\]
By Lemma~\ref{cordeg2}, it suffices to prove the inclusion
\begin{equation}\label{eqn5}
F^{n-1}A^{n+1}(X)\cap dA^n(X)\subset dF^{n-1}A^n(X).
\end{equation}

Let $d\alpha\in F^{n-1}A^{n+1}(X)\cap dA^n(X)$, where
\[
\alpha = \sum_{p+q=n}\alpha^{p,q}\in A^n(X).
\]
Since $d\alpha\in F^{n-1}A^{n+1}(X)$, inspecting its $(\leq (n-2),*)$-component yields
\[
\bar{\partial}\alpha^{n-2,2} + d\Bigl(\sum_{p=1}^{n-3}\alpha^{p,n-p}\Bigr)=0,
\]
and in particular
\[
\bar{\partial}\alpha^{n-2,2} = -\,\partial\alpha^{n-3,3}.
\]
Thus
\[
\partial\alpha^{n-2,2}\in A^{n-1,2}(X)\cap\ker d \cap \mathrm{im}\,\partial.
\]
If $A_2=0$, there exists $\gamma^{n-2,1}\in A^{n-2,1}(X)$ such that
\[
\partial\alpha^{n-2,2} = \partial\bar{\partial}\gamma^{n-2,1}.
\]
Hence
\begin{align*}
d\alpha
&= d(\alpha^{n,0}+\alpha^{n-1,1}) + \partial\alpha^{n-2,2} \\
&= d\bigl(\alpha^{n,0}+\alpha^{n-1,1} - \partial\gamma^{n-2,1}\bigr)
  \in dF^{n-1}A^n(X),
\end{align*}
and so the inclusion \eqref{eqn5} holds.  
The claim follows from Lemma~\ref{cordeg2}.
\end{proof}

Consequently, Theorem~\ref{thm-po} is an immediate consequence of Theorem~\ref{thm-CY} together with Lemma \ref{lemma4.6}.

\begin{ex}\label{ex-Naka}
We consider the Nakamura manifold \cite{Naka} associated with the six-dimensional solvable Lie algebra \(\mathfrak{s}_{12}\), endowed with a splitting-type complex structure.  
Specifically, we take the splitting-type structures \(J_C\) appearing in \cite[(3.2)]{AOUV} with parameter \(C=\frac{i}{2k}\), where \(0\neq k\in\mathbb{Z}\).  
The structure equations are
\begin{equation}\label{eqn4.1}
J_C:\left\{
\begin{aligned}
d\omega_C^{1} &= -(C-i)\,\omega_C^{13}-(C+i)\,\omega_C^{1\bar3},\\
d\omega_C^{2} &= (C-i)\,\omega_C^{23}+(C+i)\,\omega_C^{2\bar3},\\
d\omega_C^{3} &= 0,
\end{aligned}\right.
\end{equation}
where \(\omega^{i\bar j} := \omega^i\wedge\bar{\omega}^j\), and \(\{\omega^1,\omega^2,\omega^3\}\) is a global holomorphic coframe.  
By \cite[Remark~3.2]{AOUV}, the pair \((\mathfrak{s}_{12},J_C)\) determines a complex solvmanifold \(X\) with holomorphically trivial canonical bundle.  
Moreover, one has \[\dim H^{0,1}_{\bar\partial}(X,T^{1,0})=\dim H^{2,1}_{\bar\partial}(X)=5,\] see \cite[Table~10]{AOUV}.

The connected, simply-connected solvable Lie group with Lie algebra \(\mathfrak{s}_{12}\) is given by
\[
G=\mathbb{C}\ltimes_{\varphi_C}\mathbb{C}^2,
\]
equipped with the left-invariant complex structure \(J_C\) defined by \eqref{eqn4.1}.  
The action \(\varphi_C\) is diagonal, with characters
\[
\alpha_1^C(z_3)=e^{-(C-i)z_3-(C+i)\bar z_3},\qquad 
\alpha_2^C(z_3)=\bigl(\alpha_1^C(z_3)\bigr)^{-1}.
\]
Let \(\Gamma=\Gamma'\ltimes_{\varphi_C}\Gamma''\) be a lattice in \(G\), where
\[
\Gamma'=\frac{\pi}{2\,\mathrm{Im}(C)}\bigl(1-i\,\mathrm{Re}(C)\bigr)\mathbb{Z}
\;\oplus\;
\frac{i}{2}\log\!\left(\frac{3+\sqrt5}{2}\right)\mathbb{Z},
\]
and \(\Gamma''\subset \mathbb{C}^2\) is a lattice satisfying  
\(\varphi_C(z_3)(\Gamma'')\subset\Gamma''\) for all \(z_3\in\Gamma'\).  
Then \(X=G/\Gamma\) is a complex solvmanifold of splitting type whose underlying real Lie algebra is \(\mathfrak{s}_{12}\).

It is known that the Fr\"olicher spectral sequence of \(X\) degenerates at \(E_2\) (in fact,\ \(X\) satisfies the page-\(1\;\partial\bar\partial\) property), see \cite[Theorem~A]{KS}.

Recall that \(E_1^{p,q}=H^{p,q}_{\bar\partial}(X)\), and that
\[
d_1^{p,q}\colon E_1^{p,q}\longrightarrow E_1^{p+1,q},\qquad
d_1^{p,q}[\alpha]=[\partial\alpha].
\]
From \cite[Table~10]{AOUV} the Dolbeault cohomology groups and their generators are known explicitly.  
Using \cite[(3.6)--(3.7)]{AOUV}, we adopt the coframe
\[
\left\{
\begin{aligned}
&\varphi^1 = e^{(C+\bar C-2i) z_3}dz_1,\qquad
\varphi^2 = e^{-(C+\bar C-2i) z_3}dz_2,\qquad
\varphi^3 = dz_3,\\[2pt]
&\tilde\varphi^1 = e^{(C+\bar C-2i) z_3}d\bar z_1,\qquad
\tilde\varphi^2 = e^{-(C+\bar C-2i) z_3}d\bar z_2,\qquad
\tilde\varphi^3 = d\bar z_3,
\end{aligned}
\right.
\]
where the first triple has bidegree \((1,0)\) and the second \((0,1)\).  
In this basis, the structure equations become
\[
\left\{
\begin{aligned}
d\varphi^1 &= -(C+\bar C-2i)\,\varphi^{13},\\
d\varphi^2 &=  (C+\bar C-2i)\,\varphi^{23},\\
d\varphi^3 &= 0,
\end{aligned}
\qquad
\begin{aligned}
d\tilde\varphi^1 &= (C+\bar C-2i)\,\varphi^{3\tilde1},\\
d\tilde\varphi^2 &= -(C+\bar C-2i)\,\varphi^{3\tilde2},\\
d\tilde\varphi^3 &= 0.
\end{aligned}
\right.
\]

We use the wedge shorthand \(\varphi^{ij}=\varphi^i\wedge\varphi^j\), \(\varphi^{ijk}=\varphi^i\wedge\varphi^j\wedge\varphi^k\), and similarly for mixed forms.

\begin{itemize}
\item[\(\boldsymbol{E_1^{3,1}}\).]
The group \(E^{3,1}_1\) is generated by \([\varphi^{123\tilde3}]\).  
Using the expressions for \(\varphi^i\), we have
\[
\varphi^{123\tilde3} = dz_1\wedge dz_2\wedge dz_3\wedge d\bar z_3,
\]
so \(\partial\varphi^{123\tilde3}=0\).  
Hence \(d_1^{3,1}=0\).

\item[\(\boldsymbol{E_1^{2,2}}\).]
This group is generated by
\[
\mathbb{C}\langle
\varphi^{12\tilde1\tilde2},\,
\varphi^{13\tilde1\tilde3},\,
\varphi^{13\tilde2\tilde3},\,
\varphi^{23\tilde1\tilde3},\,
\varphi^{23\tilde2\tilde3}
\rangle.
\]
The form \(\varphi^{12\tilde1\tilde2}=dz_1\wedge dz_2\wedge d\bar z_1\wedge d\bar z_2\) lies in \(\ker\partial\).  
All other generators contain \(dz_3\), and are likewise \(\partial\)-closed.  
Thus \(d_1^{2,2}=0\).

\item[\(\boldsymbol{E_1^{1,3}}\).]
The group \(E^{1,3}_1\) is generated by \([\overline{\varphi^{123\tilde3}}]\), which is also \(\partial\)-closed.  
Hence \(d_1^{1,3}=0\).
\end{itemize}

Consequently,
\[
d_1^{p,q}=0 \qquad\text{whenever } p+q=4.
\]
Since the Fr\"olicher spectral sequence of \(X\) degenerates at \(E_2\), it follows that all differentials on \(p+q=4\) vanish for every \(r\ge1\).

On the other hand,
\[
d_1^{1,1}\bigl([\varphi^{1\tilde1}]\bigr)
= [\partial\varphi^{1\tilde1}]
= [-2(C+\bar C-2i)\,\varphi^{13\tilde1}]
\neq 0,
\]
so the spectral sequence does not degenerate at \(E_1\).

Finally, since 
\[
H^{3,1}_{\bar\partial}(X)=\mathbb{C}[\varphi^{123\tilde3}],\qquad
H^{2,3}_{\bar\partial}(X)=\mathbb{C}[\varphi^{12\tilde1\tilde2\tilde3}],
\]
and
\[
\mu_{3,1}\colon 
H^{0,2}_{\bar\partial}(X,T^{1,0})\cong H^{2,2}_{\bar\partial}(X)
\longrightarrow 
\mathrm{Hom}\bigl(H^{3,1}_{\bar\partial}(X), H^{2,3}_{\bar\partial}(X)\bigr),
\]
with
\[
H^{2,2}_{\bar\partial}(X)=
\mathbb{C}\langle
[\varphi^{12\tilde1\tilde2}],
[\varphi^{13\tilde1\tilde3}],
[\varphi^{13\tilde2\tilde3}],
[\varphi^{23\tilde1\tilde3}],
[\varphi^{23\tilde2\tilde3}]
\rangle,
\]
we find
\begin{equation}\label{eqn4.4}
\ker\mu_{3,1}
=
\mathbb{C}\bigl\langle
[\varphi^{13\tilde1\tilde3}],
[\varphi^{13\tilde2\tilde3}],
[\varphi^{23\tilde1\tilde3}],
[\varphi^{23\tilde2\tilde3}]
\bigr\rangle.
\end{equation}

In fact, one may directly check the case \(p+q=1\), i.e.\ verify that \(d_1^{0,1}=d_1^{1,0}=0\).  
This is immediate since
\[
H^{0,1}_{\bar\partial}(X)=\mathbb{C}[\tilde\varphi^3],
\qquad
H^{1,0}_{\bar\partial}(X)=\mathbb{C}[\varphi^3].
\]
Moreover, one easily sees that \(\ker\mu_{1,0}\) is given precisely by \eqref{eqn4.4}. By Theorem \ref{deformation obstruction}, the obstructions are contained in $\ker \mu_{3,1}\cap \ker \mu_{1,0}=\ker \mu_{3,1}$.

On the other hand, it is known that the Kuranishi space of the Nakamura manifold is a quadratic cone and thus not smooth \cite{Naka,GM88}. This may be illustrated by our Corollary \ref{cor-CY-2} because in this case both conditions $d_1^{n-1,1}=0, d_1^{n-2,2}=0$ are not satisfied.
\end{ex}

\subsection{Kodaira principles}\label{sec-KP}

In his monograph \cite{Man}, Manetti studied obstructions to deformations of complex structures and, under suitable hypotheses, proved that these obstructions lie in the kernel of the cohomological contraction map. These results are referred to as Kodaira principles. In this subsection, we show that they follow directly from Theorem~\ref{deformation obstruction}. In this sense, Theorem~\ref{deformation obstruction} may be viewed as a refined Kodaira principle.

The following statement is \cite[Theorem~8.6.1]{Man}, usually called the Kodaira principle.

\begin{thm}[Kodaira principle]
Let $X$ be a complex manifold such that the subcomplex $\partial A_X^{*,*}$ of $\partial$-exact forms is acyclic. Then the contraction map in Dolbeault cohomology
\[
\boldsymbol{i}\colon
H^2\left(X,T^{1,0}\right) \longrightarrow
\bigoplus_{p,q}\operatorname{Hom}\bigl(H^p(X,\Omega_X^q),\,H^{p+2}(X,\Omega_X^{q-1})\bigr),
\quad
\boldsymbol{i}_\eta(\omega)=\eta\lrcorner\omega,
\]
annihilates every obstruction to deformations of $X$.
\end{thm}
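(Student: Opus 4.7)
The plan is to translate the acyclicity hypothesis into a degeneration statement for the Fr\"olicher spectral sequence and then apply Theorem \ref{deformation obstruction} bidegree by bidegree. First I would unpack the acyclicity assumption: a form $\partial\alpha\in\partial A_X^{*,*}$ is $\bar{\partial}$-closed exactly when $\bar{\partial}\partial\alpha=0$, and it is $\bar{\partial}$-exact in the subcomplex exactly when it equals $\bar{\partial}(\partial\beta)=-\partial\bar{\partial}\beta$ for some $\beta$. Hence acyclicity of $(\partial A_X^{*,*},\bar{\partial})$ is equivalent to the $\partial\bar{\partial}$-lemma $\operatorname{im}\partial\cap\ker\bar{\partial}=\operatorname{im}\partial\bar{\partial}$ on $X$.

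Next I would verify that this $\partial\bar{\partial}$-lemma forces every differential $d_r^{p,q}$ of the Fr\"olicher spectral sequence to vanish. Given $\alpha^{p,q}\in A^{p,q}(X)\cap\ker\bar{\partial}$, the form $\partial\alpha^{p,q}$ is $\partial$-exact and $\bar{\partial}$-closed, so by the $\partial\bar{\partial}$-lemma there exists $\gamma\in A^{p,q-1}(X)$ with $\partial\alpha^{p,q}=\partial\bar{\partial}\gamma$. Then $x:=\alpha^{p,q}+\partial\gamma\in F^p A^{p+q}(X)$ satisfies $dx=0$ and $x^{p,q}=\alpha^{p,q}$, so Lemma \ref{cordeg1} gives $\bigoplus_{r\ge 1}d_r^{p,q}=0$ for every $(p,q)$. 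An entirely analogous construction, combined with Lemma \ref{cordeg2}, produces the remaining partial-vanishing conditions required by Theorem \ref{deformation obstruction}; this is just the classical implication that the $\partial\bar{\partial}$-lemma forces $E_1$-degeneration of the Fr\"olicher spectral sequence.

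Finally, with all Fr\"olicher differentials vanishing, Theorem \ref{deformation obstruction} applies at every bidegree $(p,q)$, so the obstruction $\sum_{j=1}^N[\phi_j,\phi_{N+1-j}]$ lies in $\ker\mu_{p,q}$ for every $(p,q)$, hence in $\ker\mu$. Identifying $\boldsymbol{i}$ with $\mu$ via the Dolbeault isomorphisms $H^p(X,\Omega_X^q)\cong H^{q,p}_{\bar{\partial}}(X)$ then yields the Kodaira principle. The only genuinely new step is the translation of the acyclicity hypothesis into the $\partial\bar{\partial}$-lemma; everything else is a direct application of the machinery already developed in the paper.
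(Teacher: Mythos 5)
Your proposal is correct and follows essentially the same route as the paper: both translate the acyclicity of $(\partial A_X^{*,*},\bar{\partial})$ into the identity $\ker\bar{\partial}\cap\operatorname{im}\partial=\operatorname{im}\partial\bar{\partial}$, use the element $\alpha^{p,q}+\partial\gamma$ together with Lemma \ref{cordeg1} to deduce $E_1$-degeneration of the Fr\"olicher spectral sequence, and then invoke Theorem \ref{deformation obstruction} at every bidegree. No substantive differences.
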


\begin{proof}
The acyclicity of the subcomplex $\partial A_X^{*,*}$ is equivalent to
\begin{equation}\label{eq:acyclic-dbar-d}
  \ker\bar\partial \cap \operatorname{im}\partial
  \;=\;
  \operatorname{im}\partial\bar\partial.
\end{equation}
This condition is stronger than the degeneration of the Fr\"olicher spectral sequence at $E_1$.

Indeed, let $\alpha^{p,q}\in A^{p,q}(X)\cap\ker\bar\partial$. Then
\(
\partial\alpha^{p,q}\in \ker\bar\partial\cap\operatorname{im}\partial
\),
so by \eqref{eq:acyclic-dbar-d} there exists $\beta^{p,q-1}\in A^{p,q-1}(X)$ such that
\[
\partial\alpha^{p,q}
=
\partial\bar\partial\beta^{p,q-1}.
\]
Set
\[
\alpha \;:=\; \alpha^{p,q} + \partial\beta^{p,q-1} \in F^pA^{p+q}(X).
\]
Then $\Pi^{p,q}\alpha=\alpha^{p,q}$ and
\[
d\alpha
=
\bar\partial\alpha^{p,q}
+
\bigl(\partial\alpha^{p,q}+\bar\partial\partial\beta^{p,q-1}\bigr)
=
0.
\]
Hence, for all $p,q$ and every $r\ge1$ we have
\[
d_r^{p,q}=0,
\]
i.e. the Fr\"olicher spectral sequence degenerates at $E_1$.  
Applying Theorem~\ref{deformation obstruction}, we conclude that all obstructions to deformations of $X$ lie in the kernel of the contraction map, which is exactly the assertion of the Kodaira principle.
\end{proof}

Next, we interpret our filtration conditions in terms of injectivity in de~Rham cohomology.  
Note that the identity
\[
F^{p+1}A^{p+q+1}(X)\cap dA^{p+q}(X)
    \;=\;
    dF^{p+1}A^{p+q}(X)
\]
is equivalent to the injectivity of the induced map
\[
\frac{\ker d\cap F^{p+1}A^{p+q+1}(X)}
     {d\bigl(F^{p+1}A^{p+q}(X)\bigr)}
\;\longrightarrow\;
\frac{\ker d\cap A^{p+q+1}(X)}
     {dA^{p+q}(X)}.
\]
Similarly, the condition
\[
F^{p+1}A^{p+q+1}(X)\cap dF^{p}A^{p+q}(X)
    \;=\;
    dF^{p+1}A^{p+q}(X)
\]
is equivalent to the injectivity of
\[
\frac{\ker d\cap F^{p+1}A^{p+q+1}(X)}
     {d\bigl(F^{p+1}A^{p+q}(X)\bigr)}
\;\longrightarrow\;
\frac{\ker d\cap F^{p}A^{p+q+1}(X)}
     {d\bigl(F^{p}A^{p+q}(X)\bigr)}.
\]

For convenience, set
\[
F_X^p
:=
\bigoplus_{\substack{i\ge p \\ 0\le q\le n}} A^{i,q}(X)
=
\bigoplus_{0\le q\le n} F^pA^{p+q}(X).
\]
In particular, $F_X^0=A^{*,*}(X)$. We then obtain the following two corollaries.

\begin{cor}\label{cor4.3}
The following two conditions are equivalent:
\begin{enumerate}
  \item The natural map \(F_X^{p+1}\to F_X^{0}\) is injective in cohomology.
  \item For every \(q\), the identity
  \[
  F^{p+1}A^{p+q+1}(X)\cap dA^{p+q}(X)
      \;=\;
      dF^{p+1}A^{p+q}(X)
  \]
  holds.
\end{enumerate}
\end{cor}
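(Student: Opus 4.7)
The plan is to unwind the statement ``injective in cohomology'' into a statement about each total degree $k$, and then to identify it term-by-term with the claimed filtration identity. Since $d$ preserves the filtration $F^\bullet A^\bullet(X)$, the graded piece $F_X^{p+1}=\bigoplus_k F^{p+1}A^k(X)$ is a subcomplex of $F_X^0=A^{*,*}(X)$ with differential the restriction of $d$, and the inclusion is a chain map compatible with the grading by total degree. Its cohomology in total degree $k$ is, by definition,
\[
H^k(F_X^{p+1})=\frac{\ker d\cap F^{p+1}A^k(X)}{d\bigl(F^{p+1}A^{k-1}(X)\bigr)},
\qquad
H^k(F_X^0)=H^k_{\mathrm{dR}}(X).
\]
The first step is therefore to observe that the map $F_X^{p+1}\to F_X^0$ is injective in cohomology if and only if the induced map $H^k(F_X^{p+1})\to H^k(F_X^0)$ is injective for every $k$.

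Next, I would rewrite this degree-wise injectivity condition concretely. A class $[\alpha]\in H^k(F_X^{p+1})$ is sent to zero in $H^k_{\mathrm{dR}}(X)$ precisely when $\alpha\in F^{p+1}A^k(X)\cap dA^{k-1}(X)$, and it vanishes already in $H^k(F_X^{p+1})$ precisely when $\alpha\in dF^{p+1}A^{k-1}(X)$. Since $dA^{k-1}(X)\subset\ker d$ automatically, the injectivity in degree $k$ is therefore equivalent to the identity
\[
F^{p+1}A^k(X)\cap dA^{k-1}(X)=dF^{p+1}A^{k-1}(X),
\]
where the inclusion ``$\supseteq$'' is free of charge.

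Finally, I would reindex by writing $k=p+q+1$. For $k\le p$ one has $F^{p+1}A^k(X)=0$, so the identity is automatic and the corresponding injectivity statement is vacuous; for $k\ge p+1$, that is $q\ge 0$, this is exactly condition~(2). Thus (1) and (2) are equivalent.

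The whole argument is pure unwinding of definitions, so I do not anticipate a conceptual obstacle. The only point to be careful about is the bookkeeping around the definition of $F_X^{p+1}$: one must verify that it is indeed a subcomplex preserved by $d$ and that its cohomology in total degree $k$ is computed by the expected quotient $\ker d\cap F^{p+1}A^k/d(F^{p+1}A^{k-1})$. Once this is granted, both directions of the equivalence reduce to the tautological reformulation above.
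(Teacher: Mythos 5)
Your argument is correct and coincides with the paper's own justification, which consists precisely of the remark that the identity $F^{p+1}A^{p+q+1}(X)\cap dA^{p+q}(X)=dF^{p+1}A^{p+q}(X)$ is equivalent to injectivity of the induced map $\bigl(\ker d\cap F^{p+1}A^{p+q+1}\bigr)/d\bigl(F^{p+1}A^{p+q}\bigr)\to\bigl(\ker d\cap A^{p+q+1}\bigr)/dA^{p+q}$, summed over $q$. Your additional bookkeeping (degree-wise reduction, the automatic inclusion $\supseteq$, and the vacuous range $k\le p$) is accurate and fills in exactly the definitional unwinding the paper leaves implicit.
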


\begin{cor}
The following two conditions are equivalent:
\begin{enumerate}
  \item The natural map \(F_X^{p+1}\to F_X^{p}\) is injective in cohomology.
  \item For every \(q\), one has
  \[
  F^{p+1}A^{p+q+1}(X)\cap dF^pA^{p+q}(X)
      \;=\;
      dF^{p+1}A^{p+q}(X).
  \]
\end{enumerate}
\end{cor}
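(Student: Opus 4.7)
The plan is to mirror the proof of Corollary~\ref{cor4.3}, but with the target complex changed from $F_X^0$ to $F_X^p$. The strategy is to decompose both sides by total degree and then translate injectivity of the induced map on de~Rham cohomology into the set-theoretic filtration identity in condition (2).

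First I would observe that $F_X^p$ is a subcomplex of $(A^{*}(X),d)$: since $\partial$ raises holomorphic degree by one and $\bar\partial$ preserves it, $d(F_X^p)\subseteq F_X^p$. Moreover, in total degree $k$ we have
\[
F_X^p\cap A^k(X)
= \bigoplus_{\substack{i\ge p\\ i+q=k}} A^{i,q}(X)
= F^pA^k(X).
\]
In particular, the inclusion $F_X^{p+1}\hookrightarrow F_X^p$ is a map of complexes, and its injectivity on cohomology is equivalent to the following statement for every total degree $k$: any $d$-closed form $\omega\in F^{p+1}A^k(X)$ that is of the form $\omega=d\eta$ for some $\eta\in F^pA^{k-1}(X)$ is already $d$-exact within $F^{p+1}A^{k-1}(X)$.

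Next I would rephrase this as the equality
\[
F^{p+1}A^{k}(X)\cap dF^{p}A^{k-1}(X)
= dF^{p+1}A^{k-1}(X),
\]
noting that the inclusion $\supseteq$ is automatic because $F^{p+1}\subseteq F^p$ and $d$ preserves $F^{p+1}$, while the inclusion $\subseteq$ is precisely the injectivity statement just formulated. Setting $k=p+q+1$ then matches each total degree with one value of $q$, and conversely running $q$ over all nonnegative integers exhausts all total degrees $k\ge p+1$ where the statement has content (outside this range both sides vanish trivially). This gives the equivalence of (1) and (2).

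The main ingredient is thus the dictionary between injectivity on cohomology and the filtration identity; no new analytic input beyond that used in Corollary~\ref{cor4.3} is required, and in fact this corollary differs from Corollary~\ref{cor4.3} only in replacing $F_X^0$ by $F_X^p$ on the target side, which in the degree-wise translation amounts to replacing $dA^{p+q}(X)$ by $dF^pA^{p+q}(X)$. No step presents a genuine obstacle; the only care needed is to verify that the argument goes through for every total degree simultaneously, but this is immediate from the fact that $d$ preserves $F_X^p$ and that both $F_X^{p+1}$ and $F_X^p$ split as direct sums over total degree.
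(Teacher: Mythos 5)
Your proposal is correct and follows essentially the same route as the paper: the paper likewise notes that the identity $F^{p+1}A^{p+q+1}(X)\cap dF^{p}A^{p+q}(X)=dF^{p+1}A^{p+q}(X)$ is, degree by degree, exactly the injectivity of the induced map on $d$-cohomology, and then sums over total degree using $F_X^p=\bigoplus_q F^pA^{p+q}(X)$. No discrepancy to report.
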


Combining these observations with Theorem~\ref{deformation obstruction}, using Lemma \ref{cordeg1} and \ref{cordeg2}, we recover the following version of the Kodaira principle.

\begin{thm}[{\cite[Corollary~8.8.4]{Man}, Kodaira principle}]
Let $X$ be a complex manifold and let $p$ be a positive integer such that the three inclusions
\[
F_X^{p+1} \longrightarrow F_X^p \longrightarrow F_X^{p-1} \longrightarrow F_X^0 = A_X^{*,*}
\]
are injective in cohomology. Then the contraction map
\[
\boldsymbol{i}\colon
H^2\left(X,T^{1,0}\right)
\longrightarrow
\bigoplus_{q=0}^n\operatorname{Hom}\bigl(H^q(X,\Omega_X^p),\,H^{q+2}(X,\Omega_X^{p-1})\bigr),
\quad
\boldsymbol{i}_\eta(\omega)=\eta\lrcorner\omega,
\]
annihilates every obstruction to deformations of $X$.
\end{thm}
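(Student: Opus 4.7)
The plan is to deduce the theorem from Theorem~\ref{deformation obstruction} by translating each of the three injectivity-in-cohomology hypotheses into a vanishing condition on the Fr\"olicher differentials, one bidegree at a time.

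Fix a bidegree $(p,q)$ with $0\le q\le n$. By Theorem~\ref{deformation obstruction}, it suffices to establish the three vanishings
\[
\bigoplus_{r\ge 1}d_r^{p,q}=0,\qquad
\bigoplus_{r\ge 1}d_r^{p-1,q+1}=0,\qquad
\bigoplus_{r-1\ge i\ge 0}d_r^{p-2-i,\,q+2+i}=0,
\]
and then conclude $\sum_{j=1}^N[\phi_j,\phi_{N+1-j}]\in\ker\mu_{p,q}$. Since the contraction map $\boldsymbol{i}$ in the statement agrees, under the identifications $H^q(X,\Omega_X^p)=H^{p,q}_{\bar\partial}(X)$ and $H^2(X,T^{1,0})=H^{0,2}_{\bar\partial}(X,T^{1,0})$, with $\bigoplus_{q=0}^n\mu_{p,q}$, summing over $q$ then yields the theorem.

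Each vanishing comes from one of the three hypotheses. First, the injectivity of $F_X^{p+1}\to F_X^p$, combined with the second corollary above, gives $F^{p+1}A^{p+q+1}(X)\cap dF^pA^{p+q}(X)=dF^{p+1}A^{p+q}(X)$ for every $q$; by Lemma~\ref{cordeg1}(iii) this is precisely $\bigoplus_{r\ge 1}d_r^{p,q}=0$. Running the same chain of equivalences with $p$ shifted by $-1$ (and the running $q$ replaced by $q+1$), the injectivity of $F_X^p\to F_X^{p-1}$ yields the second vanishing $\bigoplus_{r\ge 1}d_r^{p-1,q+1}=0$. For the third, apply Corollary~\ref{cor4.3} with $p$ shifted by $-2$: the injectivity of $F_X^{p-1}\to F_X^0$ becomes $F^{p-1}A^{p+q+1}(X)\cap dA^{p+q}(X)=dF^{p-1}A^{p+q}(X)$, which Lemma~\ref{cordeg2}, with the indices $(p,q)\mapsto(p-2,q+2)$, translates exactly into the mixed vanishing $\bigoplus_{r-1\ge i\ge 0}d_r^{p-2-i,\,q+2+i}=0$.

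The only real obstacle is aligning the three index shifts so that the corollary-plus-lemma translation lands on the correct Fr\"olicher statement at bidegree $(p,q)$; beyond this bookkeeping, Theorem~\ref{deformation obstruction} applied bidegree by bidegree delivers each $\mu_{p,q}$-annihilation, and the assembly over $q$ is formal.
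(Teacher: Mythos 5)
Your proposal is correct and follows essentially the same route as the paper, which likewise deduces this statement from Theorem~\ref{deformation obstruction} by translating the three injectivity hypotheses through Corollary~\ref{cor4.3}, its companion corollary, and Lemmas~\ref{cordeg1} and~\ref{cordeg2}; your index shifts ($p\mapsto p-1$ with $q\mapsto q+1$, and $p\mapsto p-2$ with $q\mapsto q+2$) land on exactly the three vanishing conditions required. The paper leaves this bookkeeping implicit, so your write-up is simply a more explicit version of the same argument.
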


\section{Deformations of complex parallelisable manifolds}\label{sec-para}
Compact complex manifolds with trivial holomorphic tangent bundle are often called
\emph{complex parallelisable manifolds} in the literature, see for instance \cite{Naka}.  
By a classical theorem of Wang \cite{MR74064}, such a manifold must be of the form 
\(\Gamma \backslash G\), where \(G\) is a simply connected, connected complex Lie group and 
\(\Gamma \subset G\) is a cocompact lattice.  
In \cite{Naka}, Nakamura studied complex parallelisable manifolds and their small deformations; 
later, in \cite{Rol11a}, Rollenske analysed the Kuranishi space of complex parallelisable 
nilmanifolds, see also \cite{PRW24} and \cite[Sec.\,6]{GM90}.  
The standard example is the Iwasawa manifold, whose deformations are unobstructed.  
We emphasize that the Iwasawa manifold is rather special: its Fr\"olicher spectral sequence 
degenerates at \(E_2\) but not at \(E_1\), and one has \(d_1^{p,q}\neq 0\) if and only if 
\(p=1\); see, for example, \cite[Section~5.1.2]{WX2023}.  
Consequently, our Theorem~\ref{thm-CY} cannot be applied directly to verify unobstructed 
deformations for the Iwasawa manifold.  
In this subsection we therefore continue to use analytic methods to study unobstructed 
deformations for this class of manifolds.

Let \(X\) be a compact complex manifold with trivial holomorphic tangent bundle, and let
\(\{\theta_1,\dots,\theta_n\}\) denote a global holomorphic frame of \(T^{1,0}_X\).  
Since \([\theta_j,\theta_l]\) is again a holomorphic vector field, we may write
\[
  [\theta_j,\theta_l] = \sum_{p=1}^n C^p_{jl}\,\theta_p
\]
for some constants \(C^p_{jl}\).

For any class \([\phi_1]\in H^{0,1}_{\bar{\partial}}(X,T^{1,0})\), we can write
\[
  \phi_1 = \sum_{i,j} t_{ij}\,\psi_i\,\theta_j,
\]
where \([\psi_i]\in H^{0,1}_{\bar{\partial}}(X)\).  
If all holomorphic \((0,1)\)-forms are \(d\)-closed, then for any 
\(\phi_1,\phi_2\in \ker\bar{\partial}\cap A^{0,1}(X,T^{1,0})\) we have
\[
  [\phi_1,\phi_2]
  = \sum_{i,j,k,l} t_{ij}t'_{kl}\,[\psi_i\theta_j,\psi_k\theta_l]
  = \sum_{i,j,k,l} t_{ij}t'_{kl}\,\psi_i\wedge\psi_k\,[\theta_j,\theta_l].
\]
We set
\[
  H^{0,2}_d(X):=\frac{\ker d\cap A^{0,2}(X)}{dA^{0,1}(X)\cap A^{0,2}(X)}.
\]
Assume that
\begin{equation*}
  [H^{0,1}_{\bar{\partial}}(X),H^{0,1}_{\bar{\partial}}(X)] = 0
  \quad\text{in }H^{0,2}_d(X).
\end{equation*}
Then for each pair \(i,k\) we can write
\[
  \psi_i\wedge\psi_k = d\psi_{ik} = \bar{\partial}\psi_{ik}
\]
for some \(\psi_{ik}\in A^{0,1}(X)\cap \ker\partial\).  
It follows that
\[
  [\phi_1,\phi_1]
  = \sum_{i,j,k,l} t_{ij}t_{kl}\,\psi_i\wedge\psi_k\,[\theta_j,\theta_l]
  = \bar{\partial}\!\left(\sum_{i,j,k,l} t_{ij}t_{kl}\,\psi_{ik}\,[\theta_j,\theta_l]\right).
\]
Hence we may choose
\[
  \phi_2 = \frac{1}{2}\sum_{i,j,k,l} t_{ij}t_{kl}\,\psi_{ik}\,[\theta_j,\theta_l]
  = \frac{1}{2}\sum_{p=1}^n
     \Bigl(\sum_{i,j,k,l} t_{ij}t_{kl}\,\psi_{ik}\,C^p_{jl}\Bigr)\theta_p,
\]
where 
\(\sum_{i,j,k,l} t_{ij}t_{kl}\,\psi_{ik}\,C^p_{jl} \in A^{0,1}(X)\cap \ker \partial\).

Proceeding inductively, suppose that \(\phi_1,\dots,\phi_N\) have been constructed, each of the form
\[
  \phi_i = \sum_{j=1}^n \alpha_{ij}\,\theta_j, 
  \qquad \alpha_{ij}\in A^{0,1}(X)\cap \ker\partial.
\]
Then
\begin{align*}
  \sum_{i+j=N+1} [\phi_i,\phi_j]
  &= \sum_{i+j=N+1} \sum_{k,l} [\alpha_{ik}\theta_k,\,\alpha_{jl}\theta_l] \\
  &= \sum_{i+j=N+1} \sum_{k,l} \alpha_{ik}\wedge\alpha_{jl}\,[\theta_k,\theta_l] \\
  &= \sum_{p=1}^n\!\left(\sum_{i+j=N+1} \sum_{k,l} 
        \alpha_{ik}\wedge\alpha_{jl}\,C^p_{kl}\right)\theta_p 
     \in \ker \bar{\partial},
\end{align*}
since \(\theta_k(\alpha_{jl}) = \theta_k\lrcorner(\partial \alpha_{jl}) = 0\).  
Therefore each coefficient
\[
  \sum_{i+j=N+1}\sum_{k,l}\alpha_{ik}\wedge\alpha_{jl}\,C^p_{kl}
\]
is a holomorphic \((0,2)\)-form.  
Because \(\alpha_{ij}\in A^{0,1}(X)\cap \ker\partial\), we have
\[
  \sum_{i+j=N+1}\sum_{k,l}\alpha_{ik}\wedge\alpha_{jl}\,C^p_{kl}
  \in \ker d \cap A^{0,2}(X).
\]
If, moreover, \(H^{0,2}_d(X)=\{0\}\), then there exist \(\beta_p\in A^{0,1}(X)\) such that
\[
  \sum_{i+j=N+1}\sum_{k,l}\alpha_{ik}\wedge\alpha_{jl}\,C^p_{kl}
  = d\beta_p = \bar{\partial}\beta_p.
\]
Thus we can define
\[
  \phi_{N+1} = \frac{1}{2}\sum_{p=1}^n \beta_p\,\theta_p,
\]
with \(\beta_p \in A^{0,1}(X)\cap \ker\partial\).  
In this way, the inductive construction yields unobstructed deformations.

If in addition
\[
  [[H^{0}_{\bar{\partial}}(X,T^{1,0}),H^{0}_{\bar{\partial}}(X,T^{1,0})],
   H^{0}_{\bar{\partial}}(X,T^{1,0})]=0,
\]
then \([\phi_2,\phi_1]=0\), and we may take \(\phi_i=0\) for all \(i\geq 3\).

We thus obtain the following results.

\begin{prop}
Let \(X\) be a compact complex manifold with trivial holomorphic tangent bundle such that all holomorphic \((0,1)\)-forms are \(d\)-closed.  
If moreover \(H^{0,2}_d(X)=\{0\}\), then \(X\) has unobstructed deformations.
\end{prop}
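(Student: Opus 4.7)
The plan is to prove the proposition by the classical Kodaira-Spencer power series method, exploiting the global holomorphic frame of $T^{1,0}X$ to reduce the vector-valued Maurer-Cartan problem to a scalar one controlled by $H^{0,2}_d(X)$. Fix a global holomorphic frame $\{\theta_1,\dots,\theta_n\}$ with structure constants $[\theta_k,\theta_l]=\sum_p C^p_{kl}\theta_p$, and construct each $\phi_k$ inductively in the form
\[
\phi_k=\sum_{j=1}^n \alpha_{kj}\,\theta_j,\qquad \alpha_{kj}\in A^{0,1}(X)\cap\ker\partial.
\]
The requirement that every frame coefficient be $\partial$-closed is the decisive structural property that allows the induction to close up, and the hypothesis that all holomorphic $(0,1)$-forms are $d$-closed ensures that an initial $\phi_1$ with this property is available: any class in $H^{0,1}_{\bar\partial}(X,T^{1,0})$ admits a representative of the form $\sum t_{ij}\psi_i\theta_j$ with $\psi_i\in A^{0,1}(X)\cap\ker d$.

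For the inductive step, assume $\phi_1,\dots,\phi_N$ have been constructed with the stated property and satisfy the Maurer-Cartan equation modulo $t^{N+1}$. The obstruction decomposes along the frame as
\[
\tfrac12\sum_{i+j=N+1}[\phi_i,\phi_j]=\sum_{p=1}^n \omega_p\,\theta_p,\qquad \omega_p:=\tfrac12\sum_{i+j=N+1}\sum_{k,l}\alpha_{ik}\wedge\alpha_{jl}\,C^p_{kl}\in A^{0,2}(X).
\]
Since each $\alpha_{ik}$ is $\partial$-closed, a direct calculation gives $\partial\omega_p=0$, while the standard Jacobi-based identity together with the inductive Maurer-Cartan hypothesis yields $\bar\partial\omega_p=0$; thus $\omega_p\in\ker d\cap A^{0,2}(X)$. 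The vanishing $H^{0,2}_d(X)=0$ then produces $\beta_p\in A^{0,1}(X)$ with $d\beta_p=\omega_p$, and comparing bidegrees forces $\partial\beta_p=0$ and $\bar\partial\beta_p=\omega_p$. Setting
\[
\phi_{N+1}:=\sum_{p=1}^n\beta_p\,\theta_p
\]
extends the construction to order $N+1$ while preserving the inductive hypothesis that the frame coefficients lie in $A^{0,1}(X)\cap\ker\partial$, and simultaneously cancels the obstruction.

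The principal obstacle I anticipate is the careful bookkeeping of bidegrees required to show that each $\omega_p$ genuinely lies in $\ker d\cap A^{0,2}(X)$: one must simultaneously exploit that the $\alpha_{ik}$ are $\partial$-closed (which is where the hypothesis on holomorphic $(0,1)$-forms enters crucially) and that the $\theta_k$ are holomorphic with constant structure coefficients, so that no stray derivatives contaminate the verification of $\bar\partial\omega_p=0$. Once this is in place, iterating the construction yields a formal power series solution of the Maurer-Cartan equation for every initial direction $[\phi_1]\in H^{0,1}_{\bar\partial}(X,T^{1,0})$, and by the standard Kuranishi theory the vanishing of the obstruction map at every order implies that the Kuranishi space of $X$ is smooth; that is, the deformations of $X$ are unobstructed.
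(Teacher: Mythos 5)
Your proposal follows essentially the same route as the paper: decompose each $\phi_k$ along the global holomorphic frame with $\partial$-closed $(0,1)$-form coefficients, use the constancy of the structure coefficients and $\theta_k\lrcorner(\partial\alpha_{jl})=0$ to see that the order-$(N+1)$ obstruction has $d$-closed scalar coefficients in $A^{0,2}(X)$, and then kill them using $H^{0,2}_d(X)=0$ while preserving the inductive hypothesis $\partial\beta_p=0$. The argument is correct and matches the paper's proof up to the harmless normalization of the factor $\tfrac12$.
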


\begin{prop}\label{prop12}
Let \(X\) be a compact complex manifold with trivial holomorphic tangent bundle such that all holomorphic \((0,1)\)-forms are \(d\)-closed and
\(
  [H^{0,1}_{\bar{\partial}}(X),H^{0,1}_{\bar{\partial}}(X)]
\)
has trivial class in \(H^{0,2}_d(X)\).  
If, in addition,
\[
  [[H^{0}_{\bar{\partial}}(X,T^{1,0}),H^{0}_{\bar{\partial}}(X,T^{1,0})],
   H^{0}_{\bar{\partial}}(X,T^{1,0})]=0,
\]
then \(X\) has unobstructed deformations.
\end{prop}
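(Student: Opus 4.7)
The strategy is to push the inductive construction of the preceding discussion one step further and show that, under the triple-bracket hypothesis, the Maurer-Cartan expansion in fact \emph{terminates} at order two. More precisely, I would construct $\phi_1$ and $\phi_2$ exactly as in the preceding argument and then simply set $\phi_i=0$ for every $i\geq 3$; unobstructedness will follow as soon as one verifies that $\phi(t)=\phi_1+\phi_2$ solves the Maurer-Cartan equation, i.e.\
\[
\sum_{i+j=N+1}[\phi_i,\phi_j]=0\qquad\text{for every }N\geq 2.
\]

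The core of the argument is the pointwise vanishing of $[\phi_1,\phi_2]$ and $[\phi_2,\phi_2]$. Each $\phi_i$ has the form $\sum_k\alpha_{ik}\,\theta_k$ with $\alpha_{ik}\in A^{0,1}(X)\cap\ker\partial$, and as observed in the discussion leading up to the proposition this $\partial$-closedness collapses the bracket on $A^{0,*}(X,T^{1,0})$ to the purely algebraic expression $[\alpha\,\theta_k,\beta\,\theta_l]=\alpha\wedge\beta\otimes[\theta_k,\theta_l]$. Substituting the explicit expression $\phi_2=\tfrac12\sum t_{ab}t_{cd}\,\psi_{ac}\,[\theta_b,\theta_d]$ into $[\phi_1,\phi_2]$ therefore produces only the iterated Lie bracket $[\theta_j,[\theta_b,\theta_d]]$ as its vector-field factor, and this vanishes identically by the triple-bracket hypothesis. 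For $[\phi_2,\phi_2]$ the corresponding symbol is $[[\theta_b,\theta_d],[\theta_f,\theta_h]]$, and one expansion via the Jacobi identity
\[
[[\theta_b,\theta_d],[\theta_f,\theta_h]]=\bigl[[[\theta_b,\theta_d],\theta_f],\theta_h\bigr]+\bigl[\theta_f,[[\theta_b,\theta_d],\theta_h]\bigr]
\]
rewrites it as a sum of two triple brackets of holomorphic vector fields, each again zero by hypothesis.

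With these two vanishings in hand, the Maurer-Cartan identities $\bar\partial\phi_k=\tfrac12\sum_{i+j=k}[\phi_i,\phi_j]$ may be checked level by level: the cases $k=1,2$ are the defining properties of $\phi_1,\phi_2$; $k=3$ reduces to $[\phi_1,\phi_2]=0$; $k=4$ reduces to $[\phi_2,\phi_2]=0$ since $\phi_3=0$; and for $k\geq 5$ every summand of $\sum_{i+j=k}[\phi_i,\phi_j]$ necessarily contains some factor $\phi_r$ with $r\geq 3$, which has been set to zero. Hence $\phi(t)=\phi_1+\phi_2$ is a genuine polynomial (and thus convergent) Maurer-Cartan solution, proving unobstructedness. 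The only real obstacle is the algebraic vanishing for $[\phi_2,\phi_2]$: the triple-bracket hypothesis does not apply directly to the nested bracket $[[\theta_b,\theta_d],[\theta_f,\theta_h]]$, and the Jacobi identity must be invoked first; everything else is straightforward bookkeeping.
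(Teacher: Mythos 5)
Your proposal is correct and follows essentially the same route as the paper: build $\phi_1,\phi_2$ from the preceding discussion, set $\phi_i=0$ for $i\ge 3$, and kill the remaining brackets using the triple-bracket hypothesis together with the $\partial$-closedness of all coefficient forms. In fact you are slightly more complete than the paper, which only records $[\phi_2,\phi_1]=0$ and leaves $[\phi_2,\phi_2]=0$ implicit; note also that your appeal to the Jacobi identity can be bypassed by expanding $[\theta_f,\theta_h]=\sum_p C^p_{fh}\theta_p$ with constant coefficients and applying the hypothesis term by term.
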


As an application, we recover the classical fact that the Iwasawa manifold has unobstructed deformations (see \cite{Naka} for further details).  

Let \(X=\mathbb{C}^3/\Gamma\) be the Iwasawa manifold, where \(g\in\Gamma\) acts on \(\mathbb{C}^3\) by
\[
  z'_1=z_1+\omega_1,\qquad
  z'_2=z_2+\omega_2,\qquad
  z'_3=z_3+\omega_1 z_2+\omega_3,
\]
for \(g=(\omega_1,\omega_2,\omega_3)\) and \(z'=z\cdot g\).  
There exist holomorphic \(1\)-forms \(\varphi_1,\varphi_2,\varphi_3\) which are pointwise linearly independent on \(X\) and given by
\[
  \varphi_1=dz_1,\qquad
  \varphi_2=dz_2,\qquad
  \varphi_3=dz_3-z_1dz_2,
\]
so that
\[
  d\varphi_1=d\varphi_2=0,\qquad
  d\varphi_3=\partial\varphi_3=-\varphi_1\wedge\varphi_2.
\]
On the other hand, there are holomorphic vector fields \(\theta_1,\theta_2,\theta_3\) on \(X\) given by
\[
  \theta_1=\partial_1,\qquad
  \theta_2=\partial_2+z_1\partial_3,\qquad
  \theta_3=\partial_3,
\]
where \(\partial_\lambda=\partial/\partial z_\lambda\).  
It is straightforward to check that
\[
  [\theta_1,\theta_2]=-[\theta_2,\theta_1]=\theta_3,
  \qquad
  [\theta_2,\theta_3]=[\theta_1,\theta_3]=0.
\]
The space \(H^{0,1}_{\bar{\partial}}(X)\) is spanned by \([\bar{\varphi}_1]\) and \([\bar{\varphi}_2]\).  
From the definitions of \(\varphi_1\) and \(\varphi_2\), we see that all holomorphic \((0,1)\)-forms are \(d\)-closed.

Moreover, \([H^{0,1}_{\bar{\partial}}(X),H^{0,1}_{\bar{\partial}}(X)]\) is one-dimensional, generated by 
\([\bar{\varphi}_1\wedge\bar{\varphi}_2]\).  
Since
\[
  \bar{\varphi}_1\wedge\bar{\varphi}_2
  = d(-\bar{\varphi}_3)\in dA^{0,1}(X)\cap A^{0,2}(X),
\]
its class vanishes in \(H^{0,2}_d(X)\).  
Thus Proposition~\ref{prop12} applies, and we conclude that the Iwasawa manifold has unobstructed deformations.

\bibliographystyle{alpha}
\bibliography{deformation}

\end{CJK}
\end{document}